\newtheorem{theorem}{Theorem}[]
\newtheorem{lemma}[theorem]{Lemma}
\theoremstyle{definition}
\newtheorem{definition}[theorem]{Definition}
\theoremstyle{remark}
\newtheorem{remark}[theorem]{Remark}
\numberwithin{equation}{section}
\newcommand{\partiald}[2]{  \frac{\partial #1 }{\partial #2}}
\newcommand{\derivative}[2]{ \frac{\mathrm{d} #1 }{\mathrm{d} #2}}
\newcommand{\bracket}[1]{\left( #1\right)}
\newcommand{\as}[1]{\left\langle #1\right\rangle}
\newcommand{\av}[1]{\left\vert #1\right\vert}
\newcommand{\R}{\mathbb{R}}
\providecommand{\eat}[1]{}
\newcommand{\Hm}[1]{\leavevmode{\marginpar{\tiny%
			$\hbox to 0mm{\hspace*{-0.5mm}$\leftarrow$\hss}%
			\vcenter{\vrule depth 0.1mm height 0.1mm width \the\marginparwidth}%
			\hbox to 0mm{\hss$\rightarrow$\hspace*{-0.5mm}}$\\\relax\raggedright
			#1}}}
\begin{document}
	
 %   \title[Fundamental Gap and Heat Kernels on Gaussian Spaces]{Sharp Fundamental Gap Estimate and Log-concavity of Heat Kernels on Convex Domains in Gaussian Spaces}

        \title[Fundamental Gap  on Gaussian Spaces]{Sharp Fundamental Gap Estimate on Convex Domains in Gaussian Spaces}
    
    \author{Jin Sun}
    \address{Jin Sun, School of Mathematical Sciences, Fudan University, 200433, Shanghai, China}
    \email{jsun22@m.fudan.edu.cn}

    \author{Kui Wang}
    \address{Kui Wang, School of Mathematical Sciences, Soochow University, 215006, Suzhou, China}
    \email{kuiwang@suda.edu.cn}
    
\subjclass[2020]{Primary 35P15; Secondary 58J50}
	
	%	\date{\today}
	%	\thanks{The author is supported by Shanghai Science and Technology Program [Project No. 22JC1400100]}
	
\begin{abstract}
We prove a sharp lower bound for the fundamental gap on convex domains in Gaussian spaces, the difference between the first two eigenvalues of the Ornstein-Uhlenbeck operator with Dirichlet boundary conditions. Our main result establishes that the gap is bounded below by the gap of the corresponding one-dimensional model, confirming the Gaussian analogue of the fundamental gap conjecture. Furthermore, we demonstrate that the normalized gap of the one-dimensional model is monotonically increasing with the diameter and prove the sharpness of our estimate. Beyond the fundamental gap, we also establish improved log-concavity properties for the Dirichlet heat kernel on convex domains in Gaussian spaces. Our work on Gaussian spaces complements the existing results of Andrews and Clutterbuck \cite{andrews2011} and Ni \cite{ni2013} for Euclidean domains, as well as the work of Seto, Wang, and Wei \cite{SWW19} for spherical domains.
\end{abstract} 
   
\maketitle
	
\section{Introduction}

The fundamental gap problem, concerning the difference $\lambda_2 - \lambda_1$ between the first two Dirichlet eigenvalues of the Laplacian, has been a central topic in spectral geometry for several decades. For convex domains $\Omega \subset \mathbb{R}^n$ with diameter $D$, it was conjectured independently by van den Berg \cite{vandenberg1983}, Ashbaugh-Benguria \cite{ashbaugh1989}, and Yau \cite{yau1986} that
\begin{align*}
 \Gamma(\Omega):= \lambda_2(\Omega) - \lambda_1(\Omega) \geq \frac{3\pi^2}{D^2}.
\end{align*}
After significant partial results by Singer-Wong-Yau-Yau \cite{singer1985} establishing $\Gamma(\Omega) \geq \pi^2/(4D^2)$ and Yu-Zhong \cite{yu1986} improving this to $\pi^2/D^2$, the conjecture was completely resolved by Andrews-Clutterbuck \cite{andrews2011} by establishing sharp log-concavity estimates for the first eigenfunction; see also the survey paper \cite{And15}. 
For convex domains in $\mathbb{S}^n$, sharp fundamental gap estimates have been established in various settings by Seto-Wang-Wei \cite{SWW19}, He-Wang-Zhang \cite{HWZ20}, Dai-Seto-Wei \cite{DSW21}. In hyperbolic space,  Bourni et al. \cite{BCNSWW22} showed that the fundamental gap can vanish on certain convex domains. For recent advances on fundamental gap estimates for convex domains on positively curved surfaces, we refer to \cite{CWY25} and \cite{KNTW25}, as well as results related to Robin eigenvalues \cites{ACH20, ACH21}. 
    
In addition to these developments in spectral geometry, log-concavity properties of heat kernels on convex domains have played a fundamental role in probability theory and analysis. The foundational result is the classic theorem of Brascamp and Lieb \cite{BrascampLieb1976}, who proved in 1976 that for the heat equation $\partial_t u = \Delta u - Vu$ on a convex domain $\Omega \subset \mathbb{R}^n$ with Dirichlet boundary conditions and convex potential $V$, the heat kernel $p(x,y,t)$ is log-concave in $x$ for each fixed $(y,t)$. An improved log-concavity estimate for the heat kernel was later obtained by Ni \cite{ni2013} via the elliptic maximum principle.

In parallel, the spectral theory of the Ornstein-Uhlenbeck operator has emerged as a fundamental area connecting analysis, probability, and geometry. Consider the Gaussian measure 
    \begin{equation*}
        d\mu = (2\pi)^{-n/2}e^{-|x|^2/2}dx.
    \end{equation*} 
The Ornstein-Uhlenbeck operator
    $$
    L_\mu u:= e^{|x|^2/2}\operatorname{div}(e^{-|x|^2/2}\nabla u)= \Delta u - \langle x, \nabla u \rangle
    $$
arises naturally as the infinitesimal generator of the Ornstein-Uhlenbeck process and plays a central role in Gaussian harmonic analysis. While the spectral properties of $L_\mu$ on $\mathbb{R}^n$ are well-understood through Hermite polynomial expansions \cite{hermite1864}, the Dirichlet problem on bounded convex domains presents significant new challenges.

For a bounded convex domain $\Omega \subset \mathbb{R}^n$, the Gaussian principal frequency is defined as
    $$
    \lambda_\mu(\Omega) = \inf\left\{ \frac{\int_\Omega |\nabla v|^2 d\mu}{\int_\Omega v^2 d\mu} : v \in W_0^{1,2}(\Omega, \mu), v \not\equiv 0 \right\}.
    $$
This corresponds to the first eigenvalue of the boundary value problem
    \begin{equation}\label{eq:eigenvalue}
    	\begin{cases}
    		L_\mu u = -\lambda_\mu u, & \text{ in } \Omega, \\
          u > 0 & \text{ in } \Omega, \\
    		u = 0 & \text{ on } \partial\Omega.
    	\end{cases}
    \end{equation}
Recent advances have established fundamental properties of $\lambda_\mu$. Ehrhard \cite{ehrhard1984} proved the Gaussian Faber-Krahn inequality, showing that half-spaces minimize $\lambda_\mu$ among sets of fixed Gaussian measure. Carlen-Kerce \cite{carlen2001} characterized the equality cases. Colesanti-Francini-Livshyts-Salani \cite{colesanti2024} showed that the first Dirichlet eigenfunction corresponding to $\lambda_\mu$ is log-concave, and this result was subsequently strengthened to strong log-concavity by Qin \cite{Qin2025}. However, the sharp fundamental gap estimate in Gaussian spaces and the log-concavity properties of the Gaussian heat kernel itself remained open questions.

Building on the approach of Andrews-Clutterbuck \cite{andrews2011} and Ni's elliptic method \cite{ni2013}, we establish the following sharp estimate:
    \begin{theorem}\label{main_theorem}
    	Let $\Omega \subset \mathbb{R}^n$ be a strictly convex domain with diameter $D$, and let $\lambda_{1,\mu}$ and $\lambda_{2,\mu}$ be the first two eigenvalues of the Ornstein-Uhlenbeck operator $L_\mu$ with Dirichlet boundary conditions. Then
    	\begin{equation}\label{fundamental}
    	    \lambda_{2,\mu} - \lambda_{1,\mu}\geq \bar{\lambda}_2(D) - \bar{\lambda}_1(D),
    	\end{equation}
    	where $\bar{\lambda}_i(D)$ are the eigenvalues of the one-dimensional model operator
    	\begin{equation}\label{one_dim_model}
    	    -\derivative{^2}{s^2}+s\derivative{}{s}
    	\end{equation}
    	on the interval $(-D/2, D/2)$ with Dirichlet boundary conditions. Furthermore,       for all $D>0$, we have
    	$$
    	\bar{\lambda}_2(D) - \bar{\lambda}_1(D)> \frac{3\pi^2}{D^2}.
    	$$
  
    \end{theorem}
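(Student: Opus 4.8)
The plan is to adapt the two-point maximum principle of Andrews--Clutterbuck \cite{andrews2011}, together with Ni's elliptic reformulation of the gap comparison \cite{ni2013}, to the weighted (Bakry--\'Emery) geometry of Gaussian space, and then to analyze the one-dimensional model \eqref{one_dim_model} directly. Let $u_1>0$ and $u_2$ be the first two Dirichlet eigenfunctions of $L_\mu$ on $\Omega$, so that $L_\mu u_i=-\lambda_{i,\mu}u_i$. Since $e^{-|x|^2/2}$ is smooth and bounded below on $\overline\Omega$, standard elliptic theory gives smoothness, simplicity of $\lambda_{1,\mu}$, a single nodal hypersurface for $u_2$, and, using strict convexity, $u_1\in C^2(\overline\Omega)$ with $\nabla u_1\neq 0$ on $\partial\Omega$. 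Writing $\varphi=\log u_1$, the ground-state transform $w:=u_2/u_1$ satisfies
\begin{equation*}
 \mathcal{L}w:=L_\mu w+2\langle\nabla\varphi,\nabla w\rangle=-(\lambda_{2,\mu}-\lambda_{1,\mu})\,w\qquad\text{in }\Omega,
\end{equation*}
with a Neumann-type condition on $\partial\Omega$; since $\mathcal{L}u=\rho^{-1}\operatorname{div}(\rho\nabla u)$ for $\rho=u_1^2e^{-|x|^2/2}$, the gap $\Gamma:=\lambda_{2,\mu}-\lambda_{1,\mu}$ is exactly the first nonzero eigenvalue of $-\mathcal{L}$ on $L^2(\Omega,\rho\,dx)$. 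The idea is to compare $\varphi$ and $w$ with the one-dimensional objects attached to \eqref{one_dim_model}, the structural input being that the Bakry--\'Emery Ricci tensor of $(\mathbb{R}^n,\mu)$ is $\operatorname{Ric}+\nabla^2(|x|^2/2)=I>0$, which plays here the role of the positive curvature used on the sphere in \cite{SWW19}.

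\emph{Step 1: sharp log-concavity of $u_1$.} Let $\bar u_1>0$ be the first eigenfunction of \eqref{one_dim_model} on $(-D/2,D/2)$. It is even and strictly log-concave, and $\bar\varphi:=(\log\bar u_1)'$ is odd, negative on $(0,D/2)$, blows up to $-\infty$ at $D/2$, and solves $\bar\varphi'=-\bar\varphi^2+s\bar\varphi-\bar\lambda_1(D)$. I would prove that for all $x\neq y$ in $\overline\Omega$, with $e=(x-y)/|x-y|$,
\begin{equation*}
 \langle\nabla\varphi(x)-\nabla\varphi(y),e\rangle\;\le\;2\,\bar\varphi\!\left(\tfrac{|x-y|}{2}\right),
\end{equation*}
i.e.\ $u_1$ is at least as log-concave as the model, a quantitative sharpening of the log-concavity of \cite{colesanti2024}. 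This is a two-point maximum principle for $\Phi(x,y)=\langle\nabla\varphi(x)-\nabla\varphi(y),e\rangle-2\bar\varphi(|x-y|/2)$ on $\overline\Omega\times\overline\Omega$: at a would-be positive interior maximum, differentiating the elliptic identity $\Delta\varphi-\langle x,\nabla\varphi\rangle+|\nabla\varphi|^2+\lambda_{1,\mu}=0$ twice along $e$ and using $[L_\mu,\partial_e]=\partial_e$ gives $\mathcal{L}\varphi_{ee}=2\varphi_{ee}-2|\nabla\varphi_e|^2\le 2\varphi_{ee}-2\varphi_{ee}^2$, the inequality form of the identity obeyed by $\bar\varphi'$ in the model; with the first-order conditions at the maximum and the concavity of $\bar\varphi$ this yields a contradiction, while boundary maxima are excluded by strict convexity together with $\langle\nabla\varphi,\nu\rangle\to-\infty$ on $\partial\Omega$, exactly as in \cite{andrews2011}. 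I expect this to be the main obstacle: one must track the drift $-\langle x,\nabla\varphi\rangle$ so that the comparison operator is precisely \eqref{one_dim_model}, and control the blow-up of $\nabla\varphi$ at $\partial\Omega$.

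\emph{Step 2: from log-concavity to the gap.} Granting Step 1, I would run a second comparison for $w$, following Ni's elliptic method as carried out for $\mathbb{S}^n$ in \cite{SWW19}. With $\bar u_2$ the second eigenfunction of \eqref{one_dim_model}, the quotient $\bar w=\bar u_2/\bar u_1$ is odd, strictly increasing on $(-D/2,D/2)$, and solves $-\bar w''+(s-2\bar\varphi)\bar w'=\bar\Gamma\bar w$ with $\bar\Gamma:=\bar\lambda_2(D)-\bar\lambda_1(D)$. Assuming for contradiction that $\Gamma<\bar\Gamma$ and normalizing $w$ appropriately, one applies the maximum principle to a two-point quantity of the form $w(x)-w(y)-2\bar w\big(\tfrac12\langle x-y,e\rangle\big)$ on $\overline\Omega\times\overline\Omega$; in the computation the drift term $2\langle\nabla\varphi,\nabla w\rangle$ is dominated by the model term $-2\bar\varphi\bar w'$ precisely by Step 1, while $\operatorname{Ric}_\mu\ge1$ again enters with a favorable sign. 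This forces $\Gamma\ge\bar\Gamma$, which is \eqref{fundamental}.

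\emph{Step 3: the bound $\bar\Gamma(D)>3\pi^2/D^2$.} The substitution $\bar u(s)=e^{s^2/4}\psi(s)$ turns the eigenvalue problem for \eqref{one_dim_model} on $(-D/2,D/2)$ into the Schr\"odinger problem $-\psi''+\tfrac{s^2}{4}\psi=(\bar\lambda+\tfrac12)\psi$ with Dirichlet data, so $\bar\Gamma(D)=\mu_2(D)-\mu_1(D)$ is the gap of a one-dimensional Schr\"odinger operator with the strictly convex potential $s^2/4$; in particular $\bar\Gamma(D)\ge 3\pi^2/D^2$ follows from Lavine's one-dimensional gap theorem for convex potentials. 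For the strict inequality --- and for the monotonicity of the normalized gap stated in the abstract --- I would rescale $s=D\sigma$, so that $D^2\mu_k(D)=\nu_k(D^4)$, where $\nu_k(a)$ is the $k$-th Dirichlet eigenvalue of $-\psi''+\tfrac a4\sigma^2\psi$ on $(-\tfrac12,\tfrac12)$; at $a=0$ this gives $\nu_2-\nu_1=4\pi^2-\pi^2=3\pi^2$, and by the Feynman--Hellmann formula $\tfrac{d}{da}(\nu_2-\nu_1)=\tfrac14(\langle\sigma^2\rangle_{\psi_2}-\langle\sigma^2\rangle_{\psi_1})$, the difference of second moments of the $L^2$-normalized eigenfunctions. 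Since $\psi_1$ is even and positive and $\psi_2$ is odd with its only zero at $0$, the Wronskian $W=\psi_1\psi_2'-\psi_2\psi_1'$ satisfies $W'=(\nu_1-\nu_2)\psi_1\psi_2$ and $W(\pm\tfrac12)=0$, which forces $W>0$ on $(-\tfrac12,\tfrac12)$; hence $\psi_2/\psi_1$ is, after a sign choice, strictly increasing, $(\psi_2/\psi_1)^2$ is even with a strict minimum at the origin, and for the unique $c$ with $\int(\psi_2^2-c\psi_1^2)=0$ the function $\psi_2^2-c\psi_1^2=\psi_1^2\big((\psi_2/\psi_1)^2-c\big)$ is $\le 0$ on a central subinterval and $\ge 0$ outside it. Weighting by $\sigma^2$ then yields $\langle\sigma^2\rangle_{\psi_2}>\langle\sigma^2\rangle_{\psi_1}$, so $\nu_2-\nu_1$ is strictly increasing in $a\ge 0$, whence $D^2\bar\Gamma(D)=\nu_2(D^4)-\nu_1(D^4)>3\pi^2$ for every $D>0$. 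The one delicate point here is the moment inequality, handled as above through the nodal structure of $\psi_1,\psi_2$.
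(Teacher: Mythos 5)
Your proposal follows essentially the same strategy as the paper: sharp log-concavity of the ground state, Ni-style two-point elliptic maximum principle for the quotient $w=u_2/u_1$, and then an analysis of the one-dimensional model via the Feynman--Hellmann variation formula and the nodal/crossing structure of $\overline\varphi_1,\overline\varphi_2$. Steps 2 and 3 of your outline coincide with the paper's Sections 3 and 4 in all essentials (the paper's Lemma \ref{monotonicity} is exactly your Wronskian argument producing the crossing point $b$, and its Theorem \ref{theorem_limit_1} is your Feynman--Hellmann computation; you rescale in $a=D^4$ whereas the paper differentiates in $D$, a cosmetic difference). The one genuine divergence is in Step 1: you propose to prove the log-concavity estimate directly for the Ornstein--Uhlenbeck operator, running a two-point maximum principle for $\Phi(x,y)=\langle\nabla\varphi(x)-\nabla\varphi(y),e\rangle-2\bar\varphi(|x-y|/2)$ while tracking the drift through the commutator $[L_\mu,\partial_e]=\partial_e$ and the identity $\mathcal{L}\varphi_{ee}=2\varphi_{ee}-2|\nabla\varphi_e|^2$ — and you rightly flag this as the main technical obstacle. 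The paper sidesteps this entirely: it applies the gauge transformation $v=u\,e^{-|x|^2/4}$ to convert $-L_\mu+V$ into the Schr\"odinger operator $-\Delta+\tfrac14|x|^2+V-\tfrac n2$, checks that the resulting potential satisfies the modulus-of-convexity condition of Andrews--Clutterbuck with one-dimensional model potential $\tfrac14 s^2$, and then invokes Theorem 1.5 of \cite{andrews2011} verbatim to obtain \eqref{log-concavity}. This buys a much shorter argument and avoids reproving the delicate Andrews--Clutterbuck two-point estimate in the weighted setting; conversely, your route, if carried out, would be more self-contained and would make the role of the Bakry--\'Emery curvature $\operatorname{Ric}_\mu=I$ explicit rather than hidden inside the convexity of $\tfrac14|x|^2$. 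Both are correct; the paper's is the more economical.
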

    
The key to our proof is using the gauge transformation to convert the Ornstein-Uhlenbeck operator into a Schr\"odinger operator with a convex potential $\av{x}^2$, establishing the log-concavity of the first eigenfunction for the Schr\"odinger operator, and extending the elliptic method of Ni \cite{ni2013} for the Schr\"odinger operator on convex domains.

Moreover, we establish improved log-concavity properties for the Gaussian heat kernel on convex domains. We consider the Gaussian heat kernel defined as follows.
    \begin{definition}
        The function $H_G(x,y,t)$ defined on $\Omega\times\Omega\times\R_{+}$ is called the Gaussian heat kernel on $\Omega$ if $H_G(x,y,t)$ is the fundamental solution of 
        \begin{equation}\label{eq:GaussHeatKernel}
    	\begin{cases}
    		\partial_t u(\cdot,y,t)=L_\mu u(\cdot,y,t), & \text{ in } \Omega, \\
    		u(\cdot,y,t) = 0, & \text{ on } \partial\Omega,
    	\end{cases}
        \end{equation}
    \end{definition}
    
Motivated by the classical results of Brascamp-Lieb \cite{BrascampLieb1976} and the work of Ni \cite{ni2013} for vector fields, we establish improved log-concavity estimates for the Gaussian heat kernel on convex domains with Dirichlet boundary data. Our main result on log-concavity of the Gaussian heat kernel is the following.

    \begin{theorem}\label{theorem-heatkernel}
        Let $\Omega \subset \mathbb{R}^n$ be a strictly convex domain with diameter $D$. If $H_G(x,y,t)$ is the Gaussian heat kernel on $\Omega$
        and if $\overline{H}_G(s,t)$ is the fundamental solution of $\partial_t-\partial_s^2+s\partial_s$ centered at $0$ with Dirichlet boundary condition on $[-D/2,D/2]$, then for any $t>0, \ x\neq y\in\Omega$ and any fixed point $z\in\Omega$,
        \begin{equation}\label{log-concavity-heatkernel}
            -\left\langle \nabla_y \log H_G(z,y,t) - \nabla_x \log H_G(z,x,t), \frac{y-x}{|y-x|} \right\rangle \geq -2\left(\log\overline{H}_G\right)^\prime\left(\frac{|y-x|}{2},t\right). 
        \end{equation}
    \end{theorem}
    \begin{remark}
       Estimates \eqref{fundamental} and \eqref{log-concavity-heatkernel} remain valid for Schr\"odinger operators of the form $-L_\mu + V$ with any convex potential  $V$.
    \end{remark}

The proof of Theorem~\ref{theorem-heatkernel} is based on on two key elements. Firstly, we employ the calculus of variations to establish the log-concavity of the one-dimensional Gaussian heat kernel. Secondly, we adapt Ni's elliptic method \cite{ni2013} to the Gaussian setting, which yields a quantitative comparison between the gradient of the logarithm of heat kernel and the one-dimensional model. As shown in Lemma \ref{log-concavity-1-dim} (Section \ref{sect5}), we have $\left(\log\overline{H}_G\right)^\prime(s,t)<0$ for $s>0$. Thus, this estimate \eqref{log-concavity-heatkernel}, in turn, provides an improved log-concavity estimate.

The paper is organized as follows. Section \ref{sect2} establishes preliminaries on Gaussian spaces and transforms the Ornstein-Uhlenbeck operator into a Schr\"odinger operator. Section \ref{sect3}  uses the maximum principle method to derive the lower bound of the fundamental gap of the Ornstein-Uhlenbeck operator. Section \ref{sect4}  analyzes the one-dimensional model and proves crucial monotonicity properties as well as the sharpness of the gap. Section \ref{sect5} proves the improved log-concavity of the Gaussian heat kernel on convex domains. Finally, the appendix provides the graph and table of the fundamental gap of the one-dimensional model as a function of diameter $D$.

    \section{Preliminaries}\label{sect2}
    Let $\mathbb{R}^n$ be equipped with the standard Gaussian measure
    $d\mu$.
    We call the space $(\mathbb{R}^n, d\mu)$ the \emph{Gaussian space}. For a measurable set $A \subset \mathbb{R}^n$, its Gaussian measure is denoted by $\mu(A)$. The Ornstein-Uhlenbeck operator $L_\mu$ is self-adjoint with respect to the Gaussian measure. Indeed, for $\phi, \psi \in C_c^\infty(\Omega)$, integration by parts yields
    $$
    \int_\Omega \phi L_\mu \psi \, d\mu = -\int_\Omega \langle \nabla \phi, \nabla \psi \rangle \, d\mu = \int_\Omega \psi L_\mu \phi \, d\mu.
    $$

    For a bounded domain $\Omega \subset \mathbb{R}^n$ with Lipschitz boundary and diameter $D$, we consider the weighted Sobolev space
    \begin{align*}
        W_0^{1,2}(\Omega, \mu) = \left\{ u \in L^2(\Omega, \mu) : |\nabla u| \in L^2(\Omega, \mu), \, u|_{\partial\Omega} = 0 \right\}.
    \end{align*}
    By standard variational arguments \cite{evans2010},
    the eigenvalues $\{\lambda_{i,\mu}\}_{i=1}^\infty$ of the Ornstein-Uhlenbeck operator on $\Omega$ are given by
    \begin{equation}
        \lambda_i(\Omega) = \inf_{\substack{E_i\subset C_0^\infty(\Omega)\\ E_i:\ i\text{-dim space}}}\sup_{v\in E_i} R_\mu(v),
    \end{equation}
    where the Rayleigh quotient is given by
    \begin{equation*}
        R_\mu(v) = \frac{\int_\Omega |\nabla v|^2 \, d\mu}{\int_\Omega v^2 \, d\mu}.
    \end{equation*}

    For general weighted spaces $(\R^n, e^{-f}dx)$,  the eigenvalue problem with a convex potential $V$ can be described as follows:
    \begin{equation*}
        \begin{cases} 
            -\Delta u + \langle \nabla f, \nabla u \rangle + Vu(x) = \lambda u(x), & x \in \Omega, \\ 
            u(x) = 0, & x \in \partial \Omega.
        \end{cases}
    \end{equation*}
    By applying the gauge transformation $u(x)=v(x)e^{f(x)/2}$, one removes the drift term and obtains an equivalent eigenvalue problem for $v$, given by
    \begin{equation*}
        \begin{cases} 
            -\Delta v(x) + (\frac{1}{4}\av{\nabla f}^2-\frac{1}{2}\Delta f) v(x) + Vv(x) = \lambda v(x), & x \in \Omega, \\ 
            v(x) = 0, & x \in \partial \Omega.
        \end{cases}
    \end{equation*}
    It is noteworthy that for any harmonic function or any quadratic function $f(x)$, the function $\frac{1}{4}\av{\nabla f}^2-\frac{1}{2}\Delta f$ is always convex in $\Omega$. Therefore, the original eigenvalue problem is equivalent to a standard Laplacian eigenvalue problem with a convex potential.
    In particular, after the gauge transformation $v(x)=u(x)e^{-|x|^2/4}$, the problem of the operator $-L_\mu+V$ is equivalent to the following eigenvalue problem:
    \begin{equation}\label{equivalent_ep}
        \begin{cases} 
            -\Delta v(x) + \frac{1}{4}|x|^{2}v(x) + Vv(x) = (\lambda+\frac{n}{2}) v(x), & x \in \Omega, \\ 
            v(x) = 0, & x \in \partial \Omega.
        \end{cases}
    \end{equation}
    This means that to get the lower bound of the fundamental gap of convex domains in Gaussian spaces, we only need to consider the fundamental gap of \eqref{equivalent_ep}. For convenience, we consider the convex domains in Euclidean spaces $\R^n$ and consider the Schr\"odinger operator
    \begin{equation*}
        \Delta_G:=\Delta-\frac{1}{4}\av{x}^2.
    \end{equation*}
    The eigenvalues of the operator $\Delta_G$ of $\Omega$ are denoted by $\{\lambda_{i,G}\}_{i=1}^\infty$. From \eqref{equivalent_ep}, we know that for any $i\geq 1$,
    \begin{equation}\label{difference_eigenvalue}
        \lambda_{i,G} = \lambda_{i,\mu} + \frac{n}{2}.
    \end{equation}
    Observe that for any convex potential $V$,
    \begin{equation*}
        \left(\nabla (\frac{1}{4}\av{y}^2+V(y))-\nabla(\frac{1}{4}\av{x}^2+V(x))\right)\cdot\frac{y-x}{\av{y-x}}\geq 2\derivative{(\frac{1}{4}s^2)}{s}\Bigg|_{s=\av{y-x}/2}.
    \end{equation*}
    Then, as a direct conclusion of the gauge transformation and Theorem 1.5 in \cite{andrews2011}, we have the following log-concavity comparison theorem.
    
    \begin{theorem}\label{theorem3}
        Let $\Omega$ be a strictly convex bounded domain in $\mathbb{R}^n$ with diameter $D$. Let $\varphi_{1}(x)$ be the first positive eigenfunction on $\Omega$ given by \eqref{eq:eigenvalue}. Let $\overline\varphi_{1}(s)$ be the first positive Dirichlet eigenfunction of the one-dimensional model operator \eqref{one_dim_model} on $(-D/2, D/2)$. Then for all $x\neq y\in\Omega$,
        \begin{equation}\label{log-concavity}
            -\left\langle \nabla \log \varphi_{1}(y) - \nabla \log \varphi_{1}(x), \frac{y-x}{|y-x|} \right\rangle \geq -2\left(\log\overline\varphi_{1}\right)^\prime\left(\frac{|y-x|}{2}\right). 
        \end{equation}
    \end{theorem}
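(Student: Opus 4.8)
The plan is to deduce Theorem~\ref{theorem3} from the Euclidean log-concavity comparison theorem of Andrews--Clutterbuck (their Theorem 1.5) applied to the gauge-transformed problem \eqref{equivalent_ep}. First I would set $V\equiv 0$ and pass from the Ornstein--Uhlenbeck eigenfunction $\varphi_1$ on $\Omega$ to the eigenfunction $v_1(x)=\varphi_1(x)e^{-|x|^2/4}$ of the Schr\"odinger operator $\Delta_G=\Delta-\tfrac14|x|^2$, so that the first Dirichlet eigenfunction of $-\Delta+\tfrac14|x|^2$ on $\Omega$ is $v_1$. Crucially $\log v_1=\log\varphi_1-\tfrac14|x|^2$, so $\nabla\log v_1(y)-\nabla\log v_1(x)=\nabla\log\varphi_1(y)-\nabla\log\varphi_1(x)-\tfrac12(y-x)$, and pairing with $(y-x)/|y-x|$ gives
\begin{equation*}
  -\left\langle \nabla\log v_1(y)-\nabla\log v_1(x),\frac{y-x}{|y-x|}\right\rangle
  =-\left\langle \nabla\log\varphi_1(y)-\nabla\log\varphi_1(x),\frac{y-x}{|y-x|}\right\rangle+\frac{|y-x|}{2}.
\end{equation*}
Thus the desired inequality \eqref{log-concavity} is equivalent to a modulus-of-concavity estimate for $\log v_1$ with the extra $|y-x|/2$ term absorbed on the right.

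Next I would identify the correct one-dimensional comparison model for the potential $\tfrac14|x|^2$. Andrews--Clutterbuck's framework compares $\log$ of the first eigenfunction on $\Omega$ with $\log$ of the first eigenfunction of the one-dimensional operator $-\partial_s^2+\widetilde V(s)$ on $(-D/2,D/2)$, provided $\widetilde V$ is chosen so that the ``one-dimensional potential'' condition holds: for all $x\ne y$ in $\Omega$,
\begin{equation*}
  \left(\nabla\big(\tfrac14|y|^2\big)-\nabla\big(\tfrac14|x|^2\big)\right)\cdot\frac{y-x}{|y-x|}\ \geq\ \widetilde V'\!\left(\frac{|y-x|}{2}\right)-\widetilde V'\!\left(-\frac{|y-x|}{2}\right),
\end{equation*}
which the excerpt already records with $\widetilde V(s)=\tfrac14 s^2$, since the left side equals $\tfrac12|y-x|$ and the right side equals $s$ at $s=|y-x|/2$. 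Hence the relevant model is $-\partial_s^2+\tfrac14 s^2$ on $(-D/2,D/2)$. I would then gauge-transform this one-dimensional problem \emph{back}: if $\overline v_1(s)$ is its first eigenfunction, then $\overline\varphi_1(s)=\overline v_1(s)e^{s^2/4}$ is the first eigenfunction of $-\partial_s^2+s\partial_s$ on $(-D/2,D/2)$, i.e.\ exactly the model \eqref{one_dim_model}; and $\log\overline v_1=\log\overline\varphi_1-\tfrac14 s^2$, so $(\log\overline v_1)'(s)=(\log\overline\varphi_1)'(s)-\tfrac12 s$.

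Now I would invoke Andrews--Clutterbuck Theorem 1.5, which yields $-\langle\nabla\log v_1(y)-\nabla\log v_1(x),\tfrac{y-x}{|y-x|}\rangle\geq -2(\log\overline v_1)'(|y-x|/2)$. Substituting the two gauge relations computed above: the left side becomes $-\langle\nabla\log\varphi_1(y)-\nabla\log\varphi_1(x),\tfrac{y-x}{|y-x|}\rangle+\tfrac12|y-x|$, and the right side becomes $-2(\log\overline\varphi_1)'(|y-x|/2)+2\cdot\tfrac12\cdot\tfrac{|y-x|}{2}=-2(\log\overline\varphi_1)'(|y-x|/2)+\tfrac12|y-x|$. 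The $\tfrac12|y-x|$ terms cancel on both sides, leaving exactly \eqref{log-concavity}. The extension to $-L_\mu+V$ with convex $V$ is identical, using the displayed inequality in the excerpt which verifies the one-dimensional-potential condition for $\tfrac14|x|^2+V(x)$ against the same model $\tfrac14 s^2$ (convexity of $V$ makes the extra term nonnegative).

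The main obstacle is not the algebra of the gauge transformations, which is routine, but ensuring the hypotheses of Andrews--Clutterbuck Theorem 1.5 genuinely apply: namely that $v_1$ is the \emph{first} Dirichlet eigenfunction of a Schr\"odinger operator $-\Delta+W$ with $W$ convex and sufficiently smooth on the strictly convex bounded $\Omega$ (here $W=\tfrac14|x|^2$, smooth and convex, so this is fine), that strict convexity of $\Omega$ gives the needed boundary regularity and positivity of $\varphi_1$ in the interior, and that the one-dimensional comparison potential is admissible (even, and satisfying the modulus condition with the diameter-$D$ interval). I would state these as the verified hypotheses and cite the relevant parts of \cite{andrews2011}; everything else then follows by the substitution above.
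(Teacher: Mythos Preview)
Your proposal is correct and follows exactly the approach of the paper: the paper states Theorem~\ref{theorem3} as ``a direct conclusion of the gauge transformation and Theorem~1.5 in \cite{andrews2011}'' after recording the modulus-of-convexity inequality for $\tfrac14|x|^2+V$, and you have simply written out that direct conclusion in detail, including the cancellation of the $\tfrac12|y-x|$ terms under the gauge change on both sides.
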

    \begin{remark}
        The inequality \eqref{log-concavity} can also follow from Theorem \ref{theorem-heatkernel}, since $e^{\lambda_1 t}H_G(z,x,t)\rightarrow\varphi_{1}(z)\varphi_{1}(x)$ as $t\rightarrow+\infty$.
    \end{remark}
    In fact, we can show that the first positive Dirichlet eigenfunction $\overline\varphi_{1}(s)$ is concave, and is therefore log-concave.
    \begin{lemma}\label{lemma_concave}
        Let $\overline\varphi_{1}(s)$ be the first positive Dirichlet eigenfunction of the one-dimensional model operator \eqref{one_dim_model} on $(-D/2, D/2)$. Then for all $s\in (-\frac{D}{2}, \frac{D}{2})$,
        \begin{equation*}
            \overline\varphi_{1}^{\prime\prime}(s)<0.
        \end{equation*}
    \end{lemma}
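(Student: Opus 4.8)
The plan is to read off concavity directly from the one-dimensional eigenvalue equation, with only the unimodality of the first eigenfunction requiring a separate (and standard) argument. Write $\overline\varphi_1$ for the first positive Dirichlet eigenfunction and $\bar\lambda_1=\bar\lambda_1(D)$ for the associated eigenvalue. The equation $-\overline\varphi_1''+s\,\overline\varphi_1'=\bar\lambda_1\overline\varphi_1$ on $(-D/2,D/2)$ rearranges to
\begin{equation}\label{eq:onedimform}
\overline\varphi_1''(s)=s\,\overline\varphi_1'(s)-\bar\lambda_1\,\overline\varphi_1(s).
\end{equation}
Since the model operator is self-adjoint and nonnegative with respect to the weight $e^{-s^2/2}$, a Rayleigh-quotient computation using $\overline\varphi_1(\pm D/2)=0$ gives $\bar\lambda_1>0$; together with $\overline\varphi_1>0$ on $(-D/2,D/2)$, this shows the term $-\bar\lambda_1\overline\varphi_1(s)$ in \eqref{eq:onedimform} is strictly negative throughout the interval. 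So everything reduces to controlling the sign of $s\,\overline\varphi_1'(s)$.

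First I would record that the boundary value problem is invariant under $s\mapsto -s$ and that the first eigenvalue is simple, hence $\overline\varphi_1$ is even and $\overline\varphi_1'(0)=0$. Next I would show $\overline\varphi_1'>0$ on $(-D/2,0)$ and $\overline\varphi_1'<0$ on $(0,D/2)$, i.e.\ $s\,\overline\varphi_1'(s)\le 0$ with equality only at $s=0$. The key observation is that \eqref{eq:onedimform} itself forces $\overline\varphi_1''(s_0)=-\bar\lambda_1\overline\varphi_1(s_0)<0$ at every interior critical point $s_0$, so each interior critical point is a strict local maximum; since a $C^2$ function cannot have two distinct strict local maxima on an interval without an interior local minimum between them, $s=0$ is the unique critical point of $\overline\varphi_1$. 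Then $\overline\varphi_1'$ has one sign on $(0,D/2)$, and it is negative because $\int_0^{D/2}\overline\varphi_1'=\overline\varphi_1(D/2)-\overline\varphi_1(0)<0$; the claim on $(-D/2,0)$ follows by symmetry.

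Putting the pieces together: for $s\in(0,D/2)$ both summands on the right of \eqref{eq:onedimform} are strictly negative, so $\overline\varphi_1''(s)<0$; the same holds on $(-D/2,0)$ by the symmetric argument; and at $s=0$, \eqref{eq:onedimform} gives $\overline\varphi_1''(0)=-\bar\lambda_1\overline\varphi_1(0)<0$. This establishes $\overline\varphi_1''<0$ on all of $(-D/2,D/2)$. I expect the only mildly delicate point to be the unimodality step, i.e.\ pinning down the sign of $\overline\varphi_1'$; but this is the familiar one-dimensional fact that the ground state has no interior critical point other than its maximum, and here it drops out of \eqref{eq:onedimform} with essentially no extra work, so there is no real obstacle.
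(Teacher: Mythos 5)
Your proof is correct and follows essentially the same strategy as the paper: rearrange the ODE as $\overline\varphi_1'' = s\,\overline\varphi_1' - \bar\lambda_1\overline\varphi_1$, use evenness, positivity of $\bar\lambda_1$ and $\overline\varphi_1$, and control the sign of $s\,\overline\varphi_1'$. The only organizational difference is that you establish unimodality first (every interior critical point is a strict local maximum, so $s=0$ is the unique critical point) and then read off concavity in one step, whereas the paper argues by contradiction at the first interior zero of $\overline\varphi_1''$, deducing $\overline\varphi_1'<0$ on $(0,s_0)$ en route; both hinge on the same sign evaluation of the ODE at the relevant critical point, so the content is effectively identical.
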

    \begin{proof}
        Since $\overline\varphi_{1}(s)$ is the minimizer of
        \begin{equation*}
            R_\mu(v) =\frac{\int_{-\frac{D}{2}}^{\frac{D}{2}} |\nabla v|^2 d\mu}{\int_{-\frac{D}{2}}^{\frac{D}{2}} v^2 d\mu}, 
        \end{equation*}
    where $ v \in W_0^{1,2}((-\frac{D}{2}, \frac{D}{2}), \mu)\setminus\{0\}$ and $d\mu=(2\pi)^{-1/2}e^{-s^2/2}ds$,
    we can see that $\overline\varphi_{1}(s)$ is an even and smooth function, and it does not vanish in the interior. Without loss of generality, we assume that 
        \begin{equation*}
            \overline\varphi_{1}(0)=1,\ \overline\varphi_{1}^{\prime}(0)=0, \ \overline\varphi_{1}(s)>0,\ \forall s\in(-\frac{D}{2}, \frac{D}{2}).
        \end{equation*}
        By equation \eqref{one_dim_model}, $\overline\varphi_{1}(s)$ satisfies 
        \begin{equation*}
            -\overline{\varphi}_1^{\prime\prime}(s)+ s\overline{\varphi}_1^\prime(s)=\overline{\lambda}_1\overline{\varphi}_1(s).
        \end{equation*}
        Thus, $\overline{\varphi}_1^{\prime\prime}(0)=-\overline{\lambda}_1<0$. Assume that $\overline{\varphi}_1^{\prime\prime}$ has some zeros in the interior and
        \begin{equation*}
            s_0:=\inf\left\{s\in(0, \frac{D}{2}):\overline{\varphi}_1^{\prime\prime}(s)=0\right\}.
        \end{equation*}
        Then $s_0>0$, and for all $0<s<s_0$, $\overline{\varphi}_1^{\prime\prime}(s)<0$. Hence $\overline{\varphi}_1^\prime$ is decreasing in $(0,s_0)$ and $\overline{\varphi}_1^\prime<0$ in $(0,s_0)$. But at $s_0$, 
        \begin{equation*}
        s_0\overline{\varphi}_1^\prime(s_0)=\overline{\lambda}_1\overline{\varphi}_1(s_0)>0,
        \end{equation*}
        a contradiction. So $\overline{\varphi}_1^{\prime\prime}$ does not vanish in the interior, which means $\overline{\varphi}_1^{\prime\prime}$ is always negative on $(-{D}/{2}, {D}/{2})$.
    \end{proof}
    \begin{remark}
        By Lemma \ref{lemma_concave}, it follows that $\overline{\varphi}_1^\prime<0$ in $(0,{D}/{2})$, which means the right side of \eqref{log-concavity} is always positive. Therefore, Theorem \ref{theorem3} establishes an improved log-concavity property for the first positive eigenfunction $\varphi_{1}(x)$ on $\Omega$ given by \eqref{eq:eigenvalue}, strengthening both the log-concavity result of \cite{colesanti2024} and the strong log-concavity result of \cite{Qin2025}.
    \end{remark}

    \section{Maximum principle method}\label{sect3}
    In this section, we prove the lower bound of the spectral gap in Gaussian spaces, which is proven by Andrews and Clutterbuck \cite{andrews2011}. Here we use the elliptic method as in \cite{ni2013} to  give another proof.    
    \begin{theorem}
    Let $\Omega$ be a strictly convex bounded domain in $\mathbb{R}^n$ with diameter $D$. Then the gap between the second eigenvalue $\lambda_{2,G}$ and the first eigenvalue $\lambda_{1,G}$ of $\Delta_G$ satisfies:
    \begin{equation}\label{fundamentalgap}
        \lambda_{2,G} - \lambda_{1,G} \geq \overline{\lambda}_2(D) - \overline{\lambda}_1(D),
    \end{equation}
    where $\overline{\lambda}_1(D)$ and $\overline{\lambda}_2(D)$ are the first two Dirichlet eigenvalues of the one-dimensional model operator \eqref{one_dim_model}. 
    \end{theorem}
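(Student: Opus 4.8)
The plan is to run an elliptic two‑point maximum principle for the ratio of the first two eigenfunctions, in the spirit of Ni \cite{ni2013}, with the log‑concavity comparison of Theorem \ref{theorem3} as the essential geometric input. First I would pass to the Ornstein--Uhlenbeck eigenfunctions themselves: by \eqref{difference_eigenvalue}, $\Gamma:=\lambda_{2,G}-\lambda_{1,G}=\lambda_{2,\mu}-\lambda_{1,\mu}$, so writing $\varphi_1>0$ and $\varphi_2$ for the first two Dirichlet eigenfunctions of \eqref{eq:eigenvalue} and setting $v:=\varphi_2/\varphi_1$, a direct computation gives
\[
\Delta v+\langle 2\nabla\log\varphi_1-x,\nabla v\rangle=-\Gamma v\quad\text{in }\Omega .
\]
One checks that $v$ extends smoothly to $\overline\Omega$ with $\partial_\nu v=0$ on $\partial\Omega$ (from the Hopf lemma and the equation) and that $v$ is nonconstant. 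The one‑dimensional model for comparison is $\overline v:=\overline\varphi_2/\overline\varphi_1$ on $(-D/2,D/2)$, where $\overline\varphi_2$ is the second Dirichlet eigenfunction of \eqref{one_dim_model}; it is odd, strictly increasing, and solves $\overline v''+\Lambda\,\overline v'=-\overline\Gamma\,\overline v$, where $\Lambda(s):=2(\log\overline\varphi_1)'(s)-s$ and $\overline\Gamma:=\overline\lambda_2(D)-\overline\lambda_1(D)$.

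The main step is to control the sharp constant
\[
C:=\sup_{\substack{x,y\in\overline\Omega,\;x\ne y}}\frac{v(y)-v(x)}{2\,\overline v(|y-x|/2)},
\]
which is finite and positive ($v$ is nonconstant and Lipschitz, $\overline v'(0)>0$) and is attained on $\overline\Omega\times\overline\Omega$. Suppose it is attained at a pair $(x_0,y_0)$ with $x_0\ne y_0$; put $e:=(y_0-x_0)/|y_0-x_0|$, $\tau:=|y_0-x_0|/2$. Strict convexity of $\Omega$ together with $\partial_\nu v=0$ rules out extremal pairs with either point on $\partial\Omega$, so $(x_0,y_0)\in\Omega\times\Omega$, $\tau\in(0,D/2)$ and $\overline v(\tau)>0$. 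The first‑order conditions give $\nabla v(x_0)=\nabla v(y_0)=C\overline v'(\tau)e$, and the second‑order condition --- tested on $(e,-e)$ for the $e$‑component and on $(V,V)$, $V\perp e$, for the rest --- gives $\Delta v(y_0)-\Delta v(x_0)\le 2C\overline v''(\tau)$. Substituting the equation for $v$ at the two points, bounding the drift difference via Theorem \ref{theorem3},
\[
\big\langle (2\nabla\log\varphi_1-x)(y_0)-(2\nabla\log\varphi_1-x)(x_0),\,e\big\rangle\le 4(\log\overline\varphi_1)'(\tau)-2\tau=2\Lambda(\tau),
\]
and using $\overline v'(\tau)>0$ and the equality $v(y_0)-v(x_0)=2C\overline v(\tau)$, one arrives at $-\Gamma\overline v(\tau)-\Lambda(\tau)\overline v'(\tau)\le\overline v''(\tau)$. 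The one‑dimensional equation $\overline v''(\tau)=-\overline\Gamma\overline v(\tau)-\Lambda(\tau)\overline v'(\tau)$ then makes the drift terms cancel, leaving $-\Gamma\overline v(\tau)\le-\overline\Gamma\overline v(\tau)$; since $\overline v(\tau)>0$ this yields $\Gamma\ge\overline\Gamma$, i.e.\ \eqref{fundamentalgap}.

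It remains to handle the case in which the supremum $C$ is attained only in the diagonal limit, i.e.\ $C=\max_{\overline\Omega}|\nabla v|/\overline v'(0)$. Then the two‑point argument degenerates into a gradient estimate --- a Bochner computation for the drift Laplacian $\mathcal L=\Delta+\langle 2\nabla\log\varphi_1-x,\nabla\cdot\rangle$, whose Bakry--\'Emery Ricci tensor $-2\nabla^2\log\varphi_1+I$ is $\ge I$ by log‑concavity and $\ge(2\overline\lambda_1+1)I$ by the diagonal limit of Theorem \ref{theorem3}. Since even the latter bound is not sharp for finite $D$, one must feed the diameter into the gradient estimate through the one‑dimensional profile $\overline v$, which is precisely Ni's refinement of the elliptic method. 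I expect this gradient case to be the main obstacle, together with the routine‑but‑necessary verifications underlying the maximum‑principle setup: smoothness of $v$ up to $\overline\Omega$, the Neumann identity $\partial_\nu v=0$, the exclusion of boundary extrema, and the well‑definedness and monotonicity of $\overline v$. Once these are in hand, the off‑diagonal computation above and the manipulation of the one‑dimensional ODE are straightforward. (The strict inequality $\overline\Gamma>3\pi^2/D^2$ asserted in Theorem \ref{main_theorem} is a separate property of the one‑dimensional model, established in Section \ref{sect4}.)
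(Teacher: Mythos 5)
Your Case 1 (off‑diagonal maximum) is essentially the paper's Case 1, modulo a harmless gauge choice: you work directly with the Ornstein--Uhlenbeck eigenfunctions and their ratio $v=\varphi_2/\varphi_1$, while the paper gauges first to $\Delta_G=\Delta-\tfrac14|x|^2$ and works with $w=\varphi_2/\varphi_1$ for those eigenfunctions. The two‑point function, first‑ and second‑variation identities, the use of Theorem~\ref{theorem3} to bound the drift difference, the boundary exclusion via the Neumann condition, and the final ODE comparison are the same in substance, and your computation is correct.

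There is, however, a genuine gap: Case 2, where the supremum of $Q$ is attained only in the diagonal limit, so that $C$ is realized as $\max_{\overline\Omega}|\nabla v|/\overline v'(0)$. You explicitly flag this as ``the main obstacle'' and leave it open. Your Bochner/Bakry--\'Emery sketch is not the missing piece: as you note, the pointwise curvature bound $-2\nabla^2\log\varphi_1+I\ge(2\overline\lambda_1+1)I$ coming from the diagonal limit of Theorem~\ref{theorem3} is not strong enough to give the sharp gap, and the Bochner route has no obvious way to feed in the full modulus $(\log\overline\varphi_1)'$ rather than just its value at $0$. The paper resolves Case 2 not by a Bochner estimate but by a third‑order analysis of the two‑point function along the diagonal: it extends $Q$ to the unit sphere bundle by $Q(x,X)=2\langle\nabla w(x),X\rangle/\overline w'(0)$, rules out a boundary maximizer using strict convexity and the second fundamental form, chooses a frame with $e_n=\nabla w(x_0)/|\nabla w(x_0)|$, and sets $g(s)=Q(x_0+se_n,\,x_0-se_n)$. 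Since $g'(0)=0$ and $\overline w(0)=0$, L'H\^opital gives $\lim_{s\to0}g'(s)/\overline w(s)=g''(0)$, and then $g''(0)\le 0$ becomes, after simplification, the \emph{third‑order} inequality
\begin{equation*}
2w_{nnn}(x_0)\;-\;m\,\lim_{s\to0}\frac{\overline w''(s)}{\overline w(s)}\;\le\;0,
\end{equation*}
where the limit equals $\overline w'''(0)$ because $\overline w$ is odd. Multiplying by $w_n$ and adding the second‑order conditions $0\ge(|\nabla w|^2)_{kk}\ge 2w_{kkn}w_n$ (valid because $x_0$ is an interior maximum of $|\nabla w|$) produces $0\ge\langle\nabla(\Delta w),\nabla w\rangle-\big(\overline w''/\overline w\big)|\nabla w|^2$, which, together with the equation for $w$, the diagonal limit of Theorem~\ref{theorem3}, and the $1$‑D ODE, yields $\lambda_{2,G}-\lambda_{1,G}\ge\overline\lambda_2(D)-\overline\lambda_1(D)$ in this case too. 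This third‑order, on‑the‑diagonal argument is precisely what Ni's elliptic method adds to the naive gradient estimate, and without carrying it out your proposal does not establish the theorem.
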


    \begin{proof}
    Let $w(x) = \frac{\varphi_2(x)}{\varphi_1(x)}$ and $\overline{w}(s) = \frac{1}{2}\frac{\overline{\varphi}_2(s)}{\overline{\varphi}_1(s)}$. Here $\varphi_i$ are the first two eigenfunctions given by \eqref{equivalent_ep}, and $\overline{\varphi}_i$ are the first two eigenfunctions given by the corresponding one-dimensional model, that is,
    \begin{equation*}
        {-\overline{\varphi}_i''+ \frac{1}{4}s^2\overline{\varphi}_i=\overline{\lambda}_i\overline{\varphi}_i,\quad i=1,2.}
    \end{equation*}
    Then $w$ satisfies
    \begin{equation}\label{e3.2}
        \Delta w = -(\lambda_{2,G} - \lambda_{1,G})w - 2\langle\nabla \log \varphi_1, \nabla w\rangle, 
    \end{equation}
    and $\overline{w}$ satisfies
    \begin{equation}\label{w_one_dim}
        \overline{w}''=-(\overline{\lambda}_2(D) - \overline{\lambda}_1(D))\overline{w} - \frac{2\overline{w}'\overline{\varphi}_1'}{\overline{\varphi}_1}.
    \end{equation}
    Here $\overline{w}(0)=0, \overline{w}'(0)>0$. Without loss of generality, we may assume $\overline{w}'(0)=1$.

    Note that $w$ can be extended to a smooth function on $\overline{\Omega}$ with Neumann boundary condition $\frac{\partial w}{\partial \nu} = 0$ on $\partial\Omega$ \cite{singer1985}, and $\overline{w}$ can also be extended.

    Consider the quotient of the oscillations of $w$ and $\overline{w}(s)$ and let
    \begin{equation}
        Q(x,y) := \frac{w(x) - w(y)}{\overline{w}(\frac{|x-y|}{2})}
    \end{equation}
    on $\bracket{\overline{\Omega} \times \overline{\Omega}} \backslash \Delta$ where $\Delta = \{(x,x) | x \in \overline{\Omega}\}$ is the diagonal. 
    Note that
    \begin{equation*}
        \lim_{y\rightarrow x} Q(x, y) = \frac{2\langle\nabla w(x), X\rangle}{\overline{w}'(0)},
    \end{equation*}
    where $X=\lim_{y\rightarrow x}\bracket{x-y}/\av{x-y}$.
    So the function $Q$ can be extended to the unit sphere bundle $U\overline{\Omega} = \{(x, X) | x \in \overline{\Omega}, |X| = 1\}$, with the extension defined as
    \begin{equation}
        Q(x, X) = \frac{2\langle\nabla w(x), X\rangle}{\overline{w}'(0)}.
    \end{equation}
    Then the maximum of $Q$, which is clearly nonzero and denoted by $m$, is obtained on $\bracket{\overline{\Omega} \times \overline{\Omega}} \backslash \Delta$ or on the unit sphere bundle $U\overline{\Omega}$.

    \textbf{Case 1:} the maximum of $Q$ is attained at some $(x_0, y_0)$ with $x_0 \neq y_0$. Consider the difference of the oscillations of $w$ and $\overline{w}(s)$ and let
    \begin{equation}
        Z(x,y) := {w(x) - w(y)}-m{\overline{w}(\frac{|x-y|}{2})}.
    \end{equation}
    Then $Z(x,y)\leq 0$ and $Z(x_0,y_0)=0$. If $x_0\in \partial\Omega$ and $\nu_{x_0}$ is the outer unit normal vector, then from the Neumann condition $\frac{\partial w}{\partial \nu} = 0$ and the convexity of $\Omega$, we have 
    \begin{align*}
        Z(x_0-t\nu_{x_0},y_0) 
        &= {w(x_0-t\nu_{x_0}) - w(y_0)}-m{\overline{w}(\frac{|x_0-t\nu_{x_0}-y_0|}{2})}\\
        &= w(x_0)- w(y_0) - m\bracket{{\overline{w}(\frac{|x_0-y_0|}{2})}-C_0t{\overline{w}'(\frac{|x_0-y_0|}{2})}} + O(t^2), 
    \end{align*}
    where $C_0=|x_0-y_0|\cos\theta_0$ and $\theta_0$ is the angle between $-\nu_{x_0}$ and $y_0-x_0$. Note that $\overline{w}'>0$ on $(-D/2, D/2)$, which we will prove in Lemma \ref{monotonicity}. Thus, for $t\ll 1$, we get $Z(x_0-t\nu_{x_0},y_0)>Z(x_0,y_0)$, a contradiction, which means that $x_0$ must be in $\Omega$. Similarly, the point $y_0$ should also be in $\Omega$.
    
    Now we pick a local orthonormal frame $\{e_i\}$, $1 \leq i \leq n$ such that $e_n = \frac{y_0 - x_0}{|y_0 - x_0|}$. Since $(x_0,y_0)$ is the maximum point and $\langle e_i, e_n\rangle=0$ for $i=1,\ldots,n-1$, the first variation gives
    \begin{align*}
        &\derivative{}{s}Z(x_0+se_i,y_0+se_i)\Bigg|_{s=0}=\nabla_{e_i}w(x_0) - \nabla_{e_i}w(y_0)=0,\\
        &\derivative{}{s}Z(x_0+se_i,y_0-se_i)\Bigg|_{s=0}=\nabla_{e_i}w(x_0) + \nabla_{e_i}w(y_0)=0.
    \end{align*}
    Also, we have 
    \begin{align*}
        &\derivative{}{s}Z(x_0+se_n,y_0+se_n)\Bigg|_{s=0}=\nabla_{e_n}w(x_0) - \nabla_{e_n}w(y_0)=0,\\
        &\derivative{}{s}Z(x_0+se_n,y_0-se_n)\Bigg|_{s=0}=\nabla_{e_n}w(x_0) + \nabla_{e_n}w(y_0) + m\overline{w}'(\frac{|x_0-y_0|}{2})=0.
    \end{align*}
    Putting them together we have that at $(x_0, y_0)$,
    \begin{equation}\label{e3.3}
        \nabla w(x) = \nabla w(y) = -\frac{m}{2}\overline{w}'(\frac{|x_0-y_0|}{2}) e_n.
    \end{equation}
    From the second variation, we obtain for $1 \leq i \leq n-1$, at $(x_0, y_0)$,
    \begin{equation}\label{e3.4}
    0 \geq \derivative{^2}{s^2}Z(x_0+se_i,y_0+se_i)\Bigg|_{s=0} = \nabla_{e_i,e_i}^2 w(x_0) - \nabla_{e_i,e_i}^2 w(y_0)
    \end{equation}
    and
    \begin{align}\label{e3.5}
        0 &\geq \derivative{^2}{s^2}Z(x_0+se_n,y_0-se_n)\Bigg|_{s=0} \notag\\
        &= {\nabla_{e_n,e_n}^2 w(x) - \nabla_{e_n,e_n}^2 w(y)} - m\overline{w}''(\frac{|x_0-y_0|}{2}).
    \end{align}

    Combining \eqref{e3.4} and \eqref{e3.5}, we can derive
    \begin{equation}\label{e3.6}
        {\Delta w(x) - \Delta w(y)} - m\overline{w}''(\frac{|x_0-y_0|}{2})\leq 0.
    \end{equation}
    Using \eqref{log-concavity}, \eqref{e3.2}, \eqref{w_one_dim} \eqref{e3.3} and noticing that $w(x_0)- w(y_0)=m\overline{w}(\frac{|x_0-y_0|}{2})$, we have that at $(x_0, y_0)$,
    \begin{align}
        0
        &\geq -(\lambda_{2,G} - \lambda_{1,G})m\overline{w}(\frac{|x_0-y_0|}{2}) + 2\as{\nabla \log \varphi_1, \nabla w}\Big|_{x_0}^{y_0} - m\overline{w}''(\frac{|x_0-y_0|}{2}) \notag \\
        &= -(\lambda_{2,G} - \lambda_{1,G})m\overline{w} + m\overline{w}' {\langle -\nabla \log \varphi_1(y_0) + \nabla \log \varphi_1(x_0)\rangle \cdot \frac{y_0-x_0}{|y_0-x_0|}} -m{\overline{w}''}\notag\\
        &\geq -(\lambda_{2,G} - \lambda_{1,G})m\overline{w} - 2m\overline{w}'(\log \overline{\varphi_1})'\Big|_{s=\frac{|x_0-y_0|}{2}} + m{ \left((\overline{\lambda}_2(D) - \overline{\lambda}_1(D))\overline{w} + \frac{2\overline{w}'\overline{\varphi}_1'}{\overline{\varphi}_1}\right)}\notag\\
        &=-(\lambda_{2,G} - \lambda_{1,G})m\overline{w} + (\overline{\lambda}_2(D) - \overline{\lambda}_1(D))m\overline{w},\notag
    \end{align}
    which gives \eqref{fundamentalgap}.

    \textbf{Case 2:} the maximum of $Q$ is attained at some $(x_0, X_0) \in U\overline{\Omega}$. This time the function $Z(x,y)$ is not working since $Z(x,y)$ vanishes at points where $x=y$. So we need to go back to use the function $Q$. The corresponding maximal direction $X_0 = \frac{\nabla w}{|\nabla w|}$ at $x_0$ and $m = 2|\nabla w|(x_0)$. By the definition of $Q$ on $U\overline{\Omega}$, we know that $|\nabla w|(x_0) \geq |\nabla w|(x)$ for any $x \in \overline{\Omega}$. If $x_0\in\partial\Omega$, then by the strict convexity of $\Omega$,
    \begin{equation*}
        0\leq \nabla_\nu|\nabla w|^2 = -\operatorname{II}(\nabla w,\nabla w)<0,
    \end{equation*}
    a contradiction. Here, $\operatorname{II}$ is the second fundamental form of $\partial \Omega$. So the maximum point $x_0\in\Omega$. Now choose an orthonormal frame $\{e_i\}$ at $x_0$ with $e_n = \frac{\nabla w(x_0)}{|\nabla w(x_0)|}$. In this frame, we can see that $w_i = 0$ for $1 \leq i \leq n-1$ and $w_n = \nabla w$.
    
    Since $|\nabla w|(x_0) \geq |\nabla w|(x)$ for any $x \in \overline{\Omega}$, the first derivative of $|\nabla w|^2$ vanishes
    \begin{equation}\label{e3.7}
        \nabla_k|\nabla w|^2 = 2w_{kn}w_n = 0, \quad 1 \leq k \leq n.
    \end{equation}
    And the second derivative of $|\nabla w|^2$ yields for any $1 \leq k \leq n-1$,
    \begin{equation}\label{e3.8}
        0 \geq (|\nabla w|^2)_{kk} = 2\sum_{j=1}^n (w_{jk}^2 + w_{kkj}w_j) \geq 2w_{kkn}w_n.
    \end{equation}

    Let $x(s) = x_0 + se_n$, $y(s) = x_0 - se_n$ and consider $g(s) = Q(x(s), y(s))$. Thus, $g(s) \leq g(0) = m$ for all $s \in (-\epsilon, \epsilon)$, which implies that $\lim_{s \to 0} g'(s) = 0$ and $\lim_{s \to 0} g''(s) \leq 0$.
    Taking derivatives, we have
    \begin{equation}\label{e3.13}
        g'(s) = \frac{\langle\nabla w(x(s)), e_n\rangle + \langle\nabla w(y(s)), e_n\rangle}{\overline{w}} - g(s)\frac{\overline{w}'}{\overline{w}},
    \end{equation}
    and
    \begin{align}\label{e3.14}
        g''(s) &= \frac{w_{nn}(x(s)) - w_{nn}(y(s))}{\overline{w}(s)} - 2\frac{w_{n}(x(s))
        +w_{n}(y(s))}{\overline{w}(s)} \frac{\overline{w}'(s)}{\overline{w}(s)}\\
        &{ {-\frac{\overline{w}''(s)}{\overline{w}(s)}}} g(s) + 2g(s)\left(\frac{\overline{w}'(s)}{\overline{w}(s)}\right)^2.\notag
    \end{align}
    Note that 
    \begin{equation*}
        \lim_{s \to 0} \frac{g'(s)}{\overline{w}(s)} = \frac{g''(0)}{\overline{w}'(0)} = g''(0).
    \end{equation*}
    Combining \eqref{e3.13}, the equation \eqref{e3.14} implies
    \begin{equation*}
        \lim_{s \to 0} g''(s) = 2w_{nnn}(x_0) - 2\lim_{s \to 0} g''(s) { {-\frac{\overline{w}''}{\overline{w}}}} m.
    \end{equation*}
    Therefore, we have that at $x_0$,
    \begin{equation}\label{e3.9}
        2w_{nnn} { {-\frac{\overline{w}''}{\overline{w}}}} m\leq 0. 
    \end{equation}
    Multiplying $w_n$ on \eqref{e3.8} and \eqref{e3.9} and adding them, we can obtain that at $x_0$,
    \begin{equation*}
        0 \geq \langle\nabla(\Delta w), \nabla w\rangle { {-\frac{\overline{w}''}{\overline{w}}}}|\nabla w|^2.
    \end{equation*}
    It follows from the equality \eqref{e3.2} and $w_{kn} = 0$ that at $x_0$ and at $s=0$,
    \begin{align*}
        0 &\geq -(\lambda_{2,G} - \lambda_{1,G})|\nabla w|^2 - 2\nabla^2(\log \varphi_1)(\nabla w, \nabla w) { {-\frac{\overline{w}''}{\overline{w}}}}|\nabla w|^2\\
        &{ \geq -(\lambda_{2,G} - \lambda_{1,G})|\nabla w|^2 - 2(\log \overline{\varphi}_1)''|\nabla w|^2 +\left((\overline{\lambda}_2(D) - \overline{\lambda}_1(D)) + \frac{2\overline{w}'\overline{\varphi}_1'}{\overline{w}\ \overline{\varphi}_1}\right)|\nabla w|^2}\\
        &{> -(\lambda_{2,G} - \lambda_{1,G})|\nabla w|^2 + (\overline{\lambda}_2(D) - \overline{\lambda}_1(D))|\nabla w|^2}.
    \end{align*}
    The last inequality is due to
    \begin{equation*}
        \overline{\varphi}_1>0, \quad \overline{\varphi}_1'(0)=0, \quad \overline{\varphi}_1''(0)<0,
    \end{equation*}
    and
    \begin{equation*}
        (\log \overline{\varphi}_1)''(0)=\frac{\overline{\varphi}_1''\overline{\varphi}_1-\overline{\varphi}_1'^2}{\overline{\varphi}_1^2}(0)=\frac{\overline{\varphi}_1''\overline{\varphi}_1}{\overline{\varphi}_1^2}(0)<0.
    \end{equation*}
    Thus, for both Case 1 and Case 2, the inequality \eqref{fundamentalgap} always holds.
    \end{proof}

    \section{Analysis of fundamental gap}\label{sect4}
    In this section, we study the properties of  the one-dimensional fundamental gap $\bar\lambda_2(D)-\bar\lambda_1(D)$, and prove the sharpness of Theorem \ref{main_theorem}. First, as in \cite{lavine1994}, we give the following lemma.
    \begin{lemma}\label{monotonicity}
        Let $\overline{\varphi}_{1}$ and $\overline{\varphi}_{2}$ be the first two Dirichlet eigenfunctions of \eqref{one_dim_model}. Then $\overline{\varphi}_{1}$ is an even function and $\overline{\varphi}_{2}$ is an odd function. Furthermore, there is some point $b\in(0,D/2)$ such that $\overline{\varphi}_1^2(s)>\overline{\varphi}_2^2(s),\ s\in (0,b)$ and $\overline{\varphi}_1^2(s)<\overline{\varphi}_2^2(s),\ s\in (b,D/2)$. 
    \end{lemma}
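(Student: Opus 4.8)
The plan is to settle the parity statements by a symmetry argument and then to control the sign changes of $\overline{\varphi}_1^2 - \overline{\varphi}_2^2$ through the monotonicity of the ratio $h := \overline{\varphi}_2/\overline{\varphi}_1$. Throughout I take the eigenfunctions normalized in $L^2(\mu)$, i.e.\ $\int_{-D/2}^{D/2}\overline{\varphi}_i^2\,d\mu = 1$; this normalization is essential, since without it the crossover point $b$ is not well defined. For the parity: written in self-adjoint form, \eqref{one_dim_model} reads $-(e^{-s^2/2}\phi')' = \overline{\lambda}\,e^{-s^2/2}\phi$, a regular Sturm--Liouville problem on $[-D/2,D/2]$ with Dirichlet data, so the eigenvalues are simple and $\overline{\varphi}_k$ has exactly $k-1$ interior zeros. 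Reflection $s\mapsto -s$ commutes with the operator and preserves the boundary conditions, hence preserves each one-dimensional eigenspace, so every eigenfunction is even or odd. Since $\overline{\varphi}_1$ has no interior zero it is of one sign and thus cannot be odd, so it is even (take it positive). If $\overline{\varphi}_2$ were even, its single interior zero would have to lie at $0$, whence $\overline{\varphi}_2(0)=\overline{\varphi}_2'(0)=0$ and $\overline{\varphi}_2\equiv 0$ by ODE uniqueness, which is impossible; so $\overline{\varphi}_2$ is odd, its zero sits at $s=0$, and I take $\overline{\varphi}_2>0$ on $(0,D/2)$.

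Next I would prove that $h$ is strictly increasing on $(0,D/2)$. On $[0,D/2)$ both $\overline{\varphi}_i$ are positive, so $\overline{\varphi}_1^2-\overline{\varphi}_2^2 = \overline{\varphi}_1^2(1-h)(1+h)$ has the same sign as $1-h$. Writing $h' = W/\overline{\varphi}_1^2$ with $W := \overline{\varphi}_1\overline{\varphi}_2' - \overline{\varphi}_2\overline{\varphi}_1'$ and using $\overline{\varphi}_i'' = s\overline{\varphi}_i' - \overline{\lambda}_i\overline{\varphi}_i$, one computes $W' = sW - (\overline{\lambda}_2-\overline{\lambda}_1)\overline{\varphi}_1\overline{\varphi}_2$, equivalently $(e^{-s^2/2}W)' = -(\overline{\lambda}_2-\overline{\lambda}_1)\,e^{-s^2/2}\overline{\varphi}_1\overline{\varphi}_2 < 0$ on $(0,D/2)$. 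Thus $e^{-s^2/2}W$ is strictly decreasing there; it equals $\overline{\varphi}_1(0)\overline{\varphi}_2'(0)>0$ at $s=0$ and vanishes at $s=D/2$ (both $\overline{\varphi}_i$ vanish there), so it stays strictly positive on $[0,D/2)$. Hence $W>0$ and $h'>0$ on $(0,D/2)$, so $h$ increases from its continuous boundary value $h(0)=0$. As a byproduct $\overline{w}' = \frac{1}{2}h' > 0$ on $(0,D/2)$, hence on all of $(-D/2,D/2)$ by parity, which is the monotonicity fact invoked in the proof in Section~\ref{sect3}. This Wronskian identity is the only genuine computation, and I expect getting it (and the accompanying sign bookkeeping) right to be the main point, rather than anything conceptual.

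Finally, the single crossover follows. Since $\overline{\varphi}_1$ is even and $\overline{\varphi}_2$ is odd, $\overline{\varphi}_1^2-\overline{\varphi}_2^2$ is even, so the $L^2(\mu)$ normalization gives $\int_0^{D/2}(\overline{\varphi}_1^2-\overline{\varphi}_2^2)\,d\mu = 0$; this forces $1-h$, and hence $\overline{\varphi}_1^2-\overline{\varphi}_2^2$, to change sign somewhere on $(0,D/2)$. Because $h$ is continuous, strictly increasing, and $h(0)=0$, there is a unique $b\in(0,D/2)$ with $h(b)=1$, and $1-h>0$ on $(0,b)$, $1-h<0$ on $(b,D/2)$; translating back yields $\overline{\varphi}_1^2(s)>\overline{\varphi}_2^2(s)$ for $s\in(0,b)$ and $\overline{\varphi}_1^2(s)<\overline{\varphi}_2^2(s)$ for $s\in(b,D/2)$, as claimed.
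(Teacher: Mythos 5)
Your proof is correct and takes essentially the same route as the paper: both hinge on the Wronskian argument showing that $\overline{\varphi}_2/\overline{\varphi}_1$ is strictly increasing, combined with the $L^2$-normalization forcing a unique crossing point. The only cosmetic difference is that you work directly with the Ornstein--Uhlenbeck form and the weighted Wronskian $e^{-s^2/2}W$, whereas the paper first gauge-transforms to the Schr\"odinger form $-d^2/ds^2 + s^2/4$ and uses the unweighted Wronskian; your parity argument via reflection symmetry and simplicity of Sturm--Liouville eigenvalues is also a bit more explicit than the paper's brief appeal to the min-max principle.
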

    \begin{proof}
        With the gauge transformation $v(s)=u(s)e^{-s^2/4}$, we only need to consider the first two Dirichlet eigenfunctions of the operator
        \begin{equation}\label{after_gauge}
            -\derivative{^2}{s^2} + \frac{1}{4} s^2.
        \end{equation}
        For convenience, we still denote the first two Dirichlet eigenfunctions of \eqref{after_gauge} by $\overline{\varphi}_{1}$ and $\overline{\varphi}_{2}$. 
        
        First, since $s^2$ is an even function, it is easy to see that the first Dirichlet eigenfunction $\overline{\varphi}_{1}$ is an even function. Because the second Dirichlet eigenfunction $\overline{\varphi}_{2}$ is perpendicular to $\overline{\varphi}_{1}$, we know that $\overline{\varphi}_{2}$ is an odd function from the min-max theorem.

        For the second statement, note that $\overline{\varphi}_{2}<0$ in $(-\frac{D}{2},0)$ and $\overline{\varphi}_{2}>0$ in $(0,\frac{D}{2})$. Then for $x\in(-\frac{D}{2},0)$,
        \begin{align*}
            \bracket{\frac{\overline{\varphi}_{2}(x)}{\overline{\varphi}_{1}(x)}}^\prime
            &=\frac{\overline{\varphi}_{2}^\prime(x)\overline{\varphi}_{1}(x)-\overline{\varphi}_{1}^\prime(x)\overline{\varphi}_{2}(x)}{\overline{\varphi}_{1}^2(x)}\\
            &=-\frac{1}{\overline{\varphi}_{1}^2(x)}\int_{-\frac{D}{2}}^{x}(\overline{\lambda}_2(D) -       \overline{\lambda}_1(D))\overline{\varphi}_{1}(s)\overline{\varphi}_{2}(s)ds>0.
        \end{align*}
        Similarly, for $x\in(0,\frac{D}{2})$,
        \begin{align*}
            \bracket{\frac{\overline{\varphi}_{2}(x)}{\overline{\varphi}_{1}(x)}}^\prime
            =\frac{1}{\overline{\varphi}_{1}^2(x)}\int_{0}^{x}(\overline{\lambda}_2(D) -       \overline{\lambda}_1(D))\overline{\varphi}_{1}(s)\overline{\varphi}_{2}(s)ds>0.
        \end{align*}
        Thus, ${\overline{\varphi}_{2}}/{\overline{\varphi}_{1}}$ is increasing in $(-\frac{D}{2},\frac{D}{2})$. We may assume that 
        \begin{equation*}
            \int_{-\frac{D}{2}}^{\frac{D}{2}}\overline{\varphi}_i^2=1,\quad i=1,2.
        \end{equation*}
        Therefore, 
        \begin{equation*}
            \int_{-\frac{D}{2}}^{\frac{D}{2}}(\overline{\varphi}_2^2-\overline{\varphi}_1^2)=0.
        \end{equation*}
        Note that $\overline{\varphi}_2^2/\overline{\varphi}_1^2$ is even in $(-\frac{D}{2},\frac{D}{2})$ and $\overline{\varphi}_2^2/\overline{\varphi}_1^2(0)=0$. Thus, there is some point $b\in(0,D/2)$ such that $\overline{\varphi}_2^2(b)=\overline{\varphi}_1^2(b)$. Then the conclusion follows from the monotonicity of ${\overline{\varphi}_{2}}/{\overline{\varphi}_{1}}$.
    \end{proof}

    With Lemma \ref{monotonicity}, we have the following monotonicity theorem.
    \begin{theorem}\label{theorem_limit_1}
        Let $\overline{\lambda}_1(D)$ and $\overline{\lambda}_2(D)$ be the first two Dirichlet eigenvalues of \eqref{one_dim_model}. Then $(\overline{\lambda}_2(D) - \overline{\lambda}_1(D))D^2$ is increasing in $D$, that is,
        \begin{equation}
            \derivative{\left((\overline{\lambda}_2(D) -       \overline{\lambda}_1(D))D^2\right)}{D}> 0.
        \end{equation}
        Moreover, 
        \begin{equation}\label{limit_D}
            \lim_{D\rightarrow0+} (\overline{\lambda}_2(D) -       \overline{\lambda}_1(D))D^2 =3\pi^2.
        \end{equation}
    \end{theorem}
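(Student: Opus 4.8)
The plan is to prove the two claims separately: the limit \eqref{limit_D} by a crude two-sided comparison, and the monotonicity by a scaling reduction combined with first-order eigenvalue perturbation theory and Lemma \ref{monotonicity}.

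For the limit, the gauge transformation $v(s)=u(s)e^{-s^2/4}$ makes \eqref{one_dim_model} unitarily equivalent to $-\frac{d^2}{ds^2}+\frac14 s^2$ on $(-D/2,D/2)$, which shifts all Dirichlet eigenvalues by the same constant and hence does not change the gap $\overline\lambda_2(D)-\overline\lambda_1(D)$. On $(-D/2,D/2)$ the potential satisfies $0\le\frac14 s^2\le D^2/16$, so by min-max the $k$-th such eigenvalue lies between the free value $k^2\pi^2/D^2$ and $k^2\pi^2/D^2+D^2/16$. Subtracting the cases $k=2,1$ gives $\bigl|(\overline\lambda_2(D)-\overline\lambda_1(D))-3\pi^2/D^2\bigr|\le D^2/16$; multiplying by $D^2$ and letting $D\to 0^+$ yields \eqref{limit_D}. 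Together with the monotonicity below, this also gives the strict inequality $\overline\lambda_2(D)-\overline\lambda_1(D)>3\pi^2/D^2$ used in Theorem \ref{main_theorem}.

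For the monotonicity, the key step is to move the $D$-dependence out of the domain and into the potential by rescaling. Under $s=Dt$ the operator $-\frac{d^2}{ds^2}+\frac14 s^2$ on $(-D/2,D/2)$ becomes $\frac1{D^2}\bigl(-\frac{d^2}{dt^2}+\frac{D^4}{4}t^2\bigr)$ on the fixed interval $(-1/2,1/2)$, so if $\mu_k(\beta)$ denotes the $k$-th Dirichlet eigenvalue of $-\frac{d^2}{dt^2}+\frac{\beta}{4}t^2$ on $(-1/2,1/2)$ then $(\overline\lambda_2(D)-\overline\lambda_1(D))D^2=\mu_2(D^4)-\mu_1(D^4)$. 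Since $D\mapsto D^4$ is strictly increasing on $(0,\infty)$, it suffices to show $\beta\mapsto\mu_2(\beta)-\mu_1(\beta)$ is strictly increasing. The eigenvalues $\mu_k(\beta)$ are simple (one-dimensional Sturm--Liouville), hence real-analytic in $\beta$, and differentiating the eigenvalue equation against the $L^2$-normalized eigenfunction $\psi_k$ (the boundary terms vanishing since $\psi_k=0$ on $\partial(-1/2,1/2)$ for all $\beta$) gives the Feynman--Hellmann formula $\mu_k'(\beta)=\frac14\int_{-1/2}^{1/2}t^2\psi_k^2\,dt$, so that $\frac{d}{d\beta}(\mu_2-\mu_1)=\frac14\int_{-1/2}^{1/2}t^2(\psi_2^2-\psi_1^2)\,dt$.

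The positivity of this last integral is where Lemma \ref{monotonicity} is used, and it is the only genuinely delicate point. By that lemma (applied after the additional rescaling $t\mapsto\beta^{-1/4}t$ that reduces to its exact setting, or equivalently by repeating its proof) $\psi_2^2-\psi_1^2$ is even, has $\int_0^{1/2}(\psi_2^2-\psi_1^2)=0$ after normalization, and changes sign exactly once on $(0,1/2)$, from negative to positive, at some $b\in(0,1/2)$. Using the vanishing of the integral to insert the pivot constant $b^2$,
\[
\int_{-1/2}^{1/2}t^2(\psi_2^2-\psi_1^2)\,dt=2\int_0^{1/2}(t^2-b^2)(\psi_2^2-\psi_1^2)\,dt,
\]
whose integrand is nonnegative throughout $(0,1/2)$, since $t^2-b^2$ and $\psi_2^2-\psi_1^2$ are both negative on $(0,b)$ and both positive on $(b,1/2)$, and is not identically zero; hence the integral is strictly positive. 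This gives $\mu_2'(\beta)-\mu_1'(\beta)>0$ for all $\beta>0$ and therefore the strict monotonicity of $(\overline\lambda_2(D)-\overline\lambda_1(D))D^2$. Everything else --- the simplicity and analyticity of $\mu_k(\beta)$, the Feynman--Hellmann identity, the change of variables, and the irrelevance of the constant shifts coming from the gauge transformations (the $-\tfrac12$ here, the $\tfrac n2$ of \eqref{difference_eigenvalue} in $\mathbb{R}^n$) which cancel in every gap --- is routine.
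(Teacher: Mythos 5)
Your proof follows essentially the same route as the paper's: gauge transformation to remove the drift, rescaling to a fixed interval $(-1/2,1/2)$ so the $D$-dependence sits in the potential $\tfrac14 D^4 t^2$, the Feynman--Hellmann formula for $d\tilde\lambda_i/dD$, and Lemma~\ref{monotonicity} combined with the pivot constant $b^2$ (your identity $\int t^2(\psi_2^2-\psi_1^2)=2\int_0^{1/2}(t^2-b^2)(\psi_2^2-\psi_1^2)$ is exactly the paper's splitting of the integral at $b$ and replacing $s^2$ by $b^2$); the limit via min-max comparison with the free Laplacian is also identical. The only cosmetic difference is that you introduce $\mu_k(\beta)$ with $\beta=D^4$ as an intermediate reparametrization and you are a bit more explicit about simplicity/analyticity of the eigenvalues and about the rescaling that reduces to the exact setting of Lemma~\ref{monotonicity}, but the substance is the same.
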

    \begin{proof}
        Here we only need to consider the first two Dirichlet eigenvalues of \eqref{after_gauge} because of the equality \eqref{difference_eigenvalue}, where we still denote the first two Dirichlet eigenfunctions of \eqref{after_gauge} by $\overline{\varphi}_{1}$ and $\overline{\varphi}_{2}$, the first two Dirichlet eigenvalues of \eqref{after_gauge} by $\overline{\lambda}_1(D)$ and $\overline{\lambda}_2(D)$. 
        
        Consider $\psi_i(s)=\overline{\varphi}_i(Ds)$ for $i=1,2$, which gives
        \begin{equation}\label{scaling}
            \begin{cases} 
            -\psi_i^{\prime\prime}(s) + \frac{1}{4}D^4s^{2}\psi_i(s) = \tilde{\lambda}_i\psi_i(s), & s \in I=(-\frac{1}{2}, \frac{1}{2}), \\ 
            \psi_i(\pm 1/2) = 0,
            \end{cases}
        \end{equation}
        where $\tilde{\lambda}_i=D^2\overline{\lambda}_i$. We may assume that $\int_I\psi_i^2=1$. Thus, it suffices to prove that 
        $$\derivative{(\tilde{\lambda}_2-\tilde{\lambda}_1)}{D}>0.$$

        Recall that $\psi_1$ is even and $\psi_2$ is odd. By Lemma \ref{monotonicity}, we can get some point $b\in(0,1/2)$ such that $\psi_1^2(s)>\psi_2^2(s),\ s\in (0,b)$ and $\psi_1^2(s)<\psi_2^2(s),\ s\in (b,1/2)$ . With the variation of eigenvalues with respect to $D$, we have
        \begin{equation*}
            \derivative{\tilde{\lambda}_i}{D}=\int_{-\frac{1}{2}}^{\frac{1}{2}}\partiald{(\frac{1}{4}D^4s^2)}{D}\psi_i^2(s)d s=2D^3\int_{0}^{\frac{1}{2}}s^2\psi_i^2(s)d s.
        \end{equation*}
            
        Therefore,
        \begin{align*}
            \derivative{(\tilde{\lambda}_2-\tilde{\lambda}_1)}{D}&=2D^3\int_{0}^{\frac{1}{2}}s^2(\psi_2^2(s)-\psi_1^2(s))d s\\
            &=2D^3\int_{0}^{b}s^2(\psi_2^2(s)-\psi_1^2(s))d s+2D^3\int_{b}^{\frac{1}{2}}s^2(\psi_2^2(s)-\psi_1^2(s))d s\\
            &> 2D^3\int_{0}^{b}b^2(\psi_2^2(s)-\psi_1^2(s))d s+2D^3\int_{b}^{\frac{1}{2}}b^2(\psi_2^2(s)-\psi_1^2(s))d s\\
            &= 2D^3b^2\int_{0}^{\frac{1}{2}}(\psi_2^2(s)-\psi_1^2(s))d s=0.
        \end{align*}
            
        For the limit of $\tilde{\lambda}_2-\tilde{\lambda}_1$ when $D$ is near $0$, we know from \eqref{scaling} and the variation method that
        \begin{equation*}
            \tilde{\lambda}_i=\inf_{\substack{E_i\subset C_0^\infty(I)\\ E_i:\ i\text{-dim\ space}}}\sup_{\psi\in E_i} \frac{\int_I \psi^\prime(s)^2+D^4s^2\psi^2ds}{\int_I\psi^2ds}.
        \end{equation*}
        Note that in $I$, $0\leq s^2\leq1/4$. Therefore,
        \begin{equation*}
            \frac{\int_I(\psi^\prime)^2ds}{\int_I\psi^2ds}\leq\frac{\int_I((\psi^\prime)^2+\frac{1}{4}D^4s^2\psi^2)ds}{\int_I\psi^2ds}\leq \frac{\int_I(\psi^\prime)^2ds}{\int_I\psi^2ds}+\frac{D^4}{16}.
        \end{equation*}
        Denote 
        \begin{equation*}
            {\lambda}_i^0=\inf_{\substack{E_i\subset C_0^\infty(I)\\ E_i:\ i\text{-dim\ space}}}\sup_{\psi\in E_i} \frac{\int_I(\psi^\prime)^2ds}{\int_I\psi^2ds}
        \end{equation*}
        and recall that ${\lambda}_1^0=\pi^2$ and ${\lambda}_2^0=4\pi^2$.
        Thus, we obtain
        \begin{equation*}
            {\lambda}_i^0\leq \tilde{\lambda}_i\leq {\lambda}_i^0 + \frac{D^4}{16}
        \end{equation*}
        and then
        \begin{equation*}
            \left|(\tilde{\lambda}_2-\tilde{\lambda}_1)-({\lambda}_2^0-{\lambda}_1^0)\right|\leq\frac{D^4}{16}.
        \end{equation*}
        Hence, we have
        \begin{equation*}
            \lim_{D\rightarrow0+} (\overline{\lambda}_2(D) - \overline{\lambda}_1(D))D^2 ={\lambda}_2^0-{\lambda}_1^0=3\pi^2,
        \end{equation*}
        which completes the proof.
    \end{proof}
    \begin{remark}
        Inequality \eqref{limit_D} tells us that as $D\to 0+$, the normalized gap approaches the gap in the 1-dim Euclidean case, which follows from the observation that the term $D^4s^2$ of \eqref{scaling} disappears as $D\to 0+$.
    \end{remark}

    Besides this, we can show that the inequality \eqref{fundamentalgap} is also sharp for $n\geq2$.
    \begin{theorem}
        Consider the potential $V\equiv 0$ and the thin rectangle
        \begin{equation*}
            \Omega_\varepsilon:=(-\frac{D}{2},\frac{D}{2})\times(-\varepsilon,\varepsilon),
        \end{equation*}
        whose first two eigenvalues of \eqref{equivalent_ep} are denoted by $\lambda^\varepsilon_1$ and $\lambda^\varepsilon_2$. Then we have
        \begin{equation}
            \lim_{\varepsilon\rightarrow0+}(\lambda^\varepsilon_2-\lambda^\varepsilon_1)= \overline{\lambda}_2(D) - \overline{\lambda}_1(D),
        \end{equation}
        where $\overline{\lambda}_1(D)$ and $\overline{\lambda}_2(D)$ be the first two Dirichlet eigenvalues of \eqref{one_dim_model} on $(-\frac{D}{2},\frac{D}{2})$.
    \end{theorem}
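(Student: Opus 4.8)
The plan is a dimension--reduction argument that collapses the transverse variable. By \eqref{difference_eigenvalue} the gap $\lambda_2^\varepsilon-\lambda_1^\varepsilon$ coincides with the gap of the Schr\"odinger operator $-\Delta+\tfrac14\av{x}^2$ on $\Omega_\varepsilon$ with Dirichlet data; I write $\mu_i(\varepsilon)$ for its eigenvalues, $\mathcal{Q}(v):=\int_{\Omega_\varepsilon}\bigl(\av{\nabla v}^2+\tfrac14\av{x}^2v^2\bigr)\,dx$ for the associated quadratic form, and $\aV{\cdot}_{L^2(\Omega_\varepsilon)}$ for the $L^2(\Omega_\varepsilon)$-norm, so that $\mu_2(\varepsilon)-\mu_1(\varepsilon)=\lambda_2^\varepsilon-\lambda_1^\varepsilon$. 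Let $\overline\mu_i$ and $\overline{\varphi}_i$ be the first two Dirichlet eigenvalues and eigenfunctions of $-\derivative{^2}{s^2}+\tfrac14 s^2$ on $(-\tfrac D2,\tfrac D2)$; by the one--dimensional case of the gauge correspondence of Section \ref{sect2}, $\overline\mu_2-\overline\mu_1=\overline{\lambda}_2(D)-\overline{\lambda}_1(D)$, the gap of \eqref{one_dim_model}. With $\aV{\cdot}$ the $L^2(-\tfrac D2,\tfrac D2)$-norm and $\mathcal{Q}_1(f):=\int_{-D/2}^{D/2}\bigl((f')^2+\tfrac14 s^2f^2\bigr)\,ds$, the variational characterization gives $\mathcal{Q}_1(f)\geq\overline\mu_1\aV{f}^2$ for all $f\in W_0^{1,2}(-\tfrac D2,\tfrac D2)$, with $\overline\mu_1$ replaced by $\overline\mu_2$ whenever $f\perp\overline{\varphi}_1$. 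It therefore suffices to prove $\mu_i(\varepsilon)=\overline\mu_i+\tfrac{\pi^2}{4\varepsilon^2}+O(\varepsilon^2)$ for $i=1,2$.

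First I would separate variables in $x_2$. Let $\{\phi_j\}_{j\geq1}$ be the $L^2(-\varepsilon,\varepsilon)$--orthonormal basis of Dirichlet eigenfunctions of $-\derivative{^2}{x_2^2}$, with eigenvalues $\eta_1=\tfrac{\pi^2}{4\varepsilon^2}<\eta_2=\tfrac{\pi^2}{\varepsilon^2}\leq\eta_3\leq\cdots$. Expanding $v\in W_0^{1,2}(\Omega_\varepsilon)$ as $v(x_1,x_2)=\sum_{j\geq1}f_j(x_1)\phi_j(x_2)$ with $f_j\in W_0^{1,2}(-\tfrac D2,\tfrac D2)$, Parseval together with one integration by parts in $x_2$ yield
\begin{equation*}
    \mathcal{Q}(v)=\sum_{j\geq1}\bigl(\mathcal{Q}_1(f_j)+\eta_j\aV{f_j}^2\bigr)+\int_{\Omega_\varepsilon}\tfrac14 x_2^2\,v^2\,dx ,\qquad \aV{v}_{L^2(\Omega_\varepsilon)}^2=\sum_{j\geq1}\aV{f_j}^2 ,
\end{equation*}
and, since $\av{x_2}<\varepsilon$ on $\Omega_\varepsilon$, the last term of $\mathcal{Q}(v)$ obeys $0\leq\int_{\Omega_\varepsilon}\tfrac14 x_2^2 v^2\,dx\leq\tfrac14\varepsilon^2\aV{v}_{L^2(\Omega_\varepsilon)}^2$.

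Next come the four matching estimates. For the upper bounds, testing with $v=\overline{\varphi}_i(x_1)\phi_1(x_2)$ gives $\mathcal{Q}(v)\leq(\eta_1+\overline\mu_i+\tfrac14\varepsilon^2)\aV{v}_{L^2(\Omega_\varepsilon)}^2$; using the $L^2(\Omega_\varepsilon)$--orthogonal pair $\mathrm{span}\{\overline{\varphi}_1\phi_1,\overline{\varphi}_2\phi_1\}$ as a trial space in the min--max for $i=2$, one gets $\mu_i(\varepsilon)\leq\eta_1+\overline\mu_i+\tfrac14\varepsilon^2$, $i=1,2$. For the lower bound on $\mu_1$, discard the nonnegative $x_2^2$ term and use $\mathcal{Q}_1(f_j)\geq\overline\mu_1\aV{f_j}^2$ and $\eta_j\geq\eta_1$ to obtain $\mathcal{Q}(v)\geq(\eta_1+\overline\mu_1)\aV{v}_{L^2(\Omega_\varepsilon)}^2$, hence $\mu_1(\varepsilon)\geq\eta_1+\overline\mu_1$. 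The essential point is the lower bound on $\mu_2$. By the max--min principle, $\mu_2(\varepsilon)\geq\inf\mathcal{Q}(v)/\aV{v}_{L^2(\Omega_\varepsilon)}^2$ over all $v\in W_0^{1,2}(\Omega_\varepsilon)\setminus\{0\}$ that are $L^2(\Omega_\varepsilon)$--orthogonal to $\overline{\varphi}_1(x_1)\phi_1(x_2)$. For such $v$ the orthogonality means $f_1\perp\overline{\varphi}_1$ in $L^2(-\tfrac D2,\tfrac D2)$, so the $j=1$ summand is $\geq(\eta_1+\overline\mu_2)\aV{f_1}^2$; for $j\geq2$ one has only $\mathcal{Q}_1(f_j)\geq\overline\mu_1\aV{f_j}^2$, but now $\eta_j\geq\eta_2=\eta_1+\tfrac{3\pi^2}{4\varepsilon^2}$, so the $j$-th summand is $\geq(\overline\mu_1+\eta_2)\aV{f_j}^2\geq(\eta_1+\overline\mu_2)\aV{f_j}^2$ as soon as $\tfrac{3\pi^2}{4\varepsilon^2}\geq\overline\mu_2-\overline\mu_1$, i.e. for all small $\varepsilon$. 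Summing over $j$ and discarding the $x_2^2$ term gives $\mathcal{Q}(v)\geq(\eta_1+\overline\mu_2)\aV{v}_{L^2(\Omega_\varepsilon)}^2$, hence $\mu_2(\varepsilon)\geq\eta_1+\overline\mu_2$ for all small $\varepsilon$. Combining the four estimates, $\eta_1+\overline\mu_i\leq\mu_i(\varepsilon)\leq\eta_1+\overline\mu_i+\tfrac14\varepsilon^2$ for small $\varepsilon$, so
\begin{equation*}
    \bigl\lvert(\lambda_2^\varepsilon-\lambda_1^\varepsilon)-(\overline{\lambda}_2(D)-\overline{\lambda}_1(D))\bigr\rvert=\bigl\lvert(\mu_2(\varepsilon)-\mu_1(\varepsilon))-(\overline\mu_2-\overline\mu_1)\bigr\rvert\leq\tfrac14\varepsilon^2 ,
\end{equation*}
which tends to $0$ as $\varepsilon\to0+$, yielding the claim.

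The hard part is the lower bound on $\mu_2$: one must choose exactly the right one--parameter trial family in the max--min principle --- the lowest one--dimensional mode $\overline{\varphi}_1(x_1)$ tensored with the lowest transverse mode $\phi_1(x_2)$ --- and then exploit that the transverse spectral gap $\eta_2-\eta_1=\tfrac{3\pi^2}{4\varepsilon^2}\to\infty$, so that any excited transverse component of a competitor is energetically far more costly than the one--dimensional gap $\overline\mu_2-\overline\mu_1$ being targeted. A secondary, here essentially trivial, technical point --- because $\av{x_2}<\varepsilon$ on $\Omega_\varepsilon$ --- is treating the transverse part $\tfrac14 x_2^2$ of the potential as an $O(\varepsilon^2)$ perturbation; on a general thin tube this term would require more care. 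A matching lower bound could alternatively be extracted from Theorem \ref{main_theorem} applied to strictly convex approximations of $\Omega_\varepsilon$ together with continuity of $\overline{\lambda}_i$ in $D$, but the direct argument above sidesteps both the approximation and that continuity input.
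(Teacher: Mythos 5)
Your proof is correct, and it takes a genuinely different and in fact more self-contained route than the paper's. The paper's argument produces only the one-sided estimate $\lambda_2^\varepsilon-\lambda_1^\varepsilon\leq\overline{\lambda}_2(D)-\overline{\lambda}_1(D)+\varepsilon^2$: it bounds $\lambda_1^\varepsilon$ from below by pairing the test function $u_1=\overline{\varphi}_1(s)\cos(\pi t/(2\varepsilon))$ against the true eigenfunction $\varphi_1$ and integrating, and bounds $\lambda_2^\varepsilon$ from above by the Rayleigh quotient of $u_2=\overline{\varphi}_2(s)\cos(\pi t/(2\varepsilon))$ (which is orthogonal to $\varphi_1$ by the odd/even parity in $s$). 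The complementary inequality $\lambda_2^\varepsilon-\lambda_1^\varepsilon\geq\overline{\lambda}_2(D)-\overline{\lambda}_1(D)$, which is needed to conclude the two-sided limit, is left implicit and would have to come from the main gap theorem via strictly convex approximations of the rectangle $\Omega_\varepsilon$ — a point you explicitly flag. Your separation-of-variables decomposition, by contrast, yields matching bounds $\eta_1+\overline{\mu}_i\leq\mu_i(\varepsilon)\leq\eta_1+\overline{\mu}_i+\tfrac14\varepsilon^2$ on each individual eigenvalue, with the key mechanism being that the transverse spectral gap $\eta_2-\eta_1=3\pi^2/(4\varepsilon^2)$ blows up and freezes out excited transverse modes in the max--min for $\mu_2$. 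This gives the limit directly, with no appeal to Theorem \ref{main_theorem} and no approximation argument. Your upper-bound trial functions are in substance the same tensor products the paper uses, so the two proofs agree there; the difference is entirely in how the lower bound on the gap is handled, and yours handles it cleanly.
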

    \begin{proof}
        Let $\varphi_{1}(s)$ and $\varphi_{2}(s)$ be the first and second eigenfunctions of \eqref{equivalent_ep} on $\Omega_\varepsilon$ with $V\equiv 0$. As in the proof of Theorem \ref{theorem_limit_1}, we still denote the first two Dirichlet eigenfunctions of \eqref{after_gauge} by $\overline{\varphi}_{1}$ and $\overline{\varphi}_{2}$, the first two Dirichlet eigenvalues of \eqref{after_gauge} by $\overline{\lambda}_1(D)$ and $\overline{\lambda}_2(D)$. Define 
        \begin{equation*}
            u_1(s,t)=\overline{\varphi}_{1}(s)\cos\bracket{\frac{\pi t}{2\varepsilon}},\quad u_2(s,t)=\overline{\varphi}_{2}(s)\cos\bracket{\frac{\pi t}{2\varepsilon}}.
        \end{equation*}
        Direct calculation shows that $u_1$ satisfies
        \begin{equation*}
            -\Delta u_1(s,t)= \bracket{\overline{\lambda}_1(D)-s^2+\frac{\pi^2}{4\varepsilon^2}}u_1,
        \end{equation*}
        Therefore, with a standard argument,
        \begin{align*}
            \int_{\Omega_\varepsilon}\bracket{\lambda^\varepsilon_1-\overline{\lambda}_1(D)-\frac{\pi^2}{4\varepsilon^2}}u_1\varphi_1
            &= -\int_{\Omega_\varepsilon}u_1\Delta\varphi_1+\int_{\Omega_\varepsilon}\varphi_1(\Delta u_1+t^2u_1)\\
            &\geq -\int_{\Omega_\varepsilon}u_1\Delta\varphi_1+\int_{\Omega_\varepsilon}\varphi_1\Delta u_1=0.
            \end{align*}
        which means that
        \begin{equation*}
            \lambda^\varepsilon_1\geq \overline{\lambda}_1(D)+\frac{\pi^2}{4\varepsilon^2}.
        \end{equation*}
        Note that $\overline{\varphi}_{2}(s)$ is odd and ${\varphi}_{1}(s,t)$ is even with respect to $s$. Therefore,
        \begin{equation*}
            \int_{\Omega_\varepsilon}u_2{\varphi}_{1}=0.
        \end{equation*}
        Similarly, for $u_2$, we have
        \begin{equation*}
            -u_2\Delta u_2(s,t)= \bracket{\overline{\lambda}_2(D)-s^2+\frac{\pi^2}{4\varepsilon^2}}u_2^2\leq \bracket{\overline{\lambda}_2(D)+\varepsilon^2+\frac{\pi^2}{4\varepsilon^2}-s^2-t^2}u_2^2.
        \end{equation*}
        Thus,
        \begin{equation*}
            \lambda^\varepsilon_2\leq \frac{\int_{\Omega_\varepsilon}\bracket{\av{\nabla u_2}^2+(s^2+t^2)u_2^2}dsdt}{\int_{\Omega_\varepsilon}u_2^2dsdt}\leq \overline{\lambda}_2(D)+\varepsilon^2+\frac{\pi^2}{4\varepsilon^2}.
        \end{equation*}
        Hence, 
        \begin{equation*}
            \lambda^\varepsilon_2-\lambda^\varepsilon_1\leq \overline{\lambda}_2(D) - \overline{\lambda}_1(D)+\varepsilon^2,
        \end{equation*}
        which completes the proof.
    \end{proof}
    
% Though the diameter $D$ of $\Omega$ in all theorems refers to the Euclidean metric. However, it is more natural to consider the diameter  $D_G$ of $\Omega$ under the Gaussian metric. This leads to the following question:
% \begin{question}
%     Does the fundamental gap estimate \eqref{fundamental} remain valid if the diameter $D$ is replaced by $D_G$?
% \end{question}

\section{Log-concavity of Gaussian heat kernel on convex domains}\label{sect5}

In this section, we prove Theorem \ref{theorem-heatkernel}, which establishes the improved log-concavity of the Gaussian heat kernel on convex domains in Gaussian space. 

Let the Gaussian heat kernel $H_G(x,y,t)$ and $\overline{H}_G(s,t)$ be given in Theorem \ref{theorem-heatkernel}. Let $M(x,y,t)$ be the Mehler kernel given by
\begin{equation*}
    M(x,y,t):=\frac{1}{\bracket{1-e^{-2t}}^{{n}/{2}}}\exp\bracket{-\frac{1}{2}\frac{e^{-2t}(\av{x}^2+\av{y}^2)-2e^{-t}x\cdot y}{1-e^{-2t}}},
\end{equation*}
which satisfies $\partial_t M=L_\mu M$ in $\R^n$.
And let $\overline{M}(s,t)$ be the corresponding fundamental solution for the operator $\partial_t-\partial_s^2+s\partial_s$, concentrating at $s=0$. That is,
\begin{equation}\label{mehler_one_dim}
    \overline{M}(s,t):=\frac{1}{\bracket{1-e^{-2t}}^{{n}/{2}}}\exp\bracket{-\frac{1}{2}\frac{e^{-2t}s^2}{1-e^{-2t}}}.
\end{equation}

Define $\Phi(x,t):=\bracket{H_G/M}(z,x,t)$ for any fixed point $z\in\Omega$, and $\psi(s,t):=\bracket{\overline{H}_G/\overline{M}}(s,t)$. Note that from symmetry considerations, we have $$\bracket{\log\overline{H}_G}^\prime(0,t)=\bracket{\log\overline{M}}^\prime(0,t)=0.$$

Our first step is to establish the log-concavity of  $\overline{H}_G(s,t)$ and $\psi(s,t)$ as functions of the spatial variable $s$. This property is fundamental and will be utilized in the subsequent proof of the improved log-concavity estimate.

% from the result of "H. Brascamp, E. Lieb, On the extensions of the Brunn–Minkowski and Prékopa–Leindler theorems, including inequalities for log concave functions, and with an application to the diffusion equation, J. Functional Analysis 22 (1976) 366–389."
\begin{lemma}\label{log-concavity-1-dim}
    The functions $\overline{H}_G(s,t)$ and $\psi(s,t)$ are log-concave in $s$. In particular, $\bracket{\log\overline{H}_G}^\prime<0$ and $\bracket{\log\psi}^\prime<0$ on $(0,D/2)$.
\end{lemma}
\begin{proof}
    It suffices to show that $\psi(s,t)$ is log-concave, since the log-concavity of $\overline{H}_G(s,t)$ follows from the identity $\log\overline{H}_G(s,t)=\log\psi(s,t)+\log\overline{M}(s,t)$, where $\overline{M}(s,t)$ is log-concave for all $t>0$.
    
    To prove the log-concavity of $\psi(s,t)$, we consider the gauge transformations $\widetilde{H}_G(s,t):=\overline{H}_G(s,t)e^{-s^2/4}$ and $\widetilde{M}(s,t):=\overline{M}(s,t)e^{-s^2/4}$. Then $\widetilde{H}_G(s,t)$ is the fundamental solution of the following equation:
    \begin{equation}
    \begin{cases}
        \partial_t u(s,t)-\partial_s^2u(s,t)+(\frac{s^2}{4}-\frac{1}{2})u(s,t)=0,\quad s\in I_D:=(-\frac{D}{2},\frac{D}{2}).\\
        u(-\frac{D}{2},t) = u(\frac{D}{2},t) =  0.
    \end{cases}
    \end{equation}
    
    By applying the Trotter product formula with  $x_0 = 0, x_N = s$, we obtain
    \begin{align}\label{expansion_for_tilde_heatkernel}
        \widetilde{H}_G(s,t) =& \lim_{N \to \infty} (4\pi t/N)^{-N/2} \int_{\mathbb{R}^{N-1}} dx_1 \ldots dx_{N-1} \\
        &\times \prod_{j=1}^N \left\{ \exp \left[ -\frac{N}{4t}(x_j - x_{j-1})^2 - \frac{4t}{N}V(x_j) \right] \chi_{I_D}(x_j) \right\}\notag,
    \end{align}
    where $V(s)={s^2}/{4}-{1}/{2}$. By direct calculation, we obtain from equation \eqref{mehler_one_dim} that
    \begin{equation}\label{tilde_mehler_1_dim}
        \widetilde{M}(s,t)=\frac{1}{\bracket{1-e^{-2t}}^{{n}/{2}}}\exp\bracket{-\frac{s^2}{4}\frac{1+e^{-2t}}{1-e^{-2t}}}=\frac{\exp\bracket{-\frac{s^2}{4}\coth{t}}}{\bracket{1-e^{-2t}}^{{n}/{2}}}.
    \end{equation}
    
    Hence, from equations \eqref{expansion_for_tilde_heatkernel}, \eqref{tilde_mehler_1_dim}, together with $\psi(s,t)={\widetilde{H}_G(s,t)}/{\widetilde{M}(s,t)}$, we have
    \begin{align}\label{expansion_for_quotient}
        &\psi(s,t)
        =\lim_{N \to \infty} \bracket{1-e^{-2t}}^{-{n}/{2}}(4\pi t/N)^{-N/2} \int_{\mathbb{R}^{N-1}} dx_1 \ldots dx_{N-1}\\
        &\times \left\{ \exp \left[ -\frac{N}{4t}\sum_{j=1}^N(x_j - x_{j-1})^2 - \frac{4t}{N}\sum_{j=1}^NV(x_j) + \frac{s^2}{4}\coth{t} \right] \chi_{I_D}(x_j) \right\}\notag.
    \end{align}
    
    To establish log-concavity, it suffices to show the integrand of \eqref{expansion_for_quotient} is log-concave by Theorem 1.1 in \cite{Brascamp1975some}. So we only need to prove the following inequality for $N\gg 1$ with $x_0=0$ and any $x_i\in I_D$ for $i=1,\ldots,N$.
    \begin{equation}\label{quadradic_form}
        F_t(x_1,\ldots,x_N)=\frac{N}{t} \sum_{j=1}^{N} (x_j - x_{j-1})^2 + \frac{4t}{N} \sum_{j=1}^{N} x_j^2 - x_N^2 \coth(t)\geq 0.
    \end{equation}
    The inequality is trivial when $x_N=0$. Therefore, after scaling, we may assume the largest point is $x_{j_0}=1$ for some $1\leq j_0\leq N$ and $\av{x_j}\leq 1$ for any $1\leq j\leq N$. Let $\{y_k\}_{k=1}^{N}=\{\av{x_j}\}_{j=1}^{N}$ be the sequence obtained by taking absolute values and arranged so that $0\leq y_1\leq\cdots\leq y_{N-1}\leq y_N=1$.

    We claim that $F_t(y_1,\ldots,y_N)\leq F_t(x_1,\ldots,x_N)$.
    
    Indeed, after the absolute value operation, it is straightforward to verify that the first term $\sum_{j=1}^{N} (x_j - x_{j-1})^2$ in \eqref{quadradic_form} becomes smaller while the other terms remain unchanged. Subsequently, rearranging the sequence to satisfy $y_1\leq\cdots\leq y_{N_1}\leq y_N=1$ further decreases both the first term and the last term in \eqref{quadradic_form}. Consequently, we obtain the sequence $\{y_k\}_{k=1}^{N}$ and $F_t(y_1,\ldots,y_N)\leq F_t(x_1,\ldots,x_N)$.

    Therefore, it suffices to establish inequality \eqref{quadradic_form} under the condition $0\leq x_1\leq\cdots\leq x_{N-1}\leq x_N=1$. We define the piecewise linear function $y(s)$ for $s \in [0,1]$ by:
    \begin{equation}
        y\left( s \right) = x_j + (x_j - x_{j-1})\bracket{Ns-j}, \quad \text{for}\ s\in [\frac{j-1}{N},\frac{j}{N}],\ j=1,\ldots,N.
    \end{equation}
    Then $y(s)$ is a Lipschitz function satisfying
    \begin{align*}
        &\sum_{j=1}^{N} (x_j - x_{j-1})^2 = \frac{1}{N} \int_0^1 [y'(s)]^2  ds, \\
        &\sum_{j=0}^{N-1} x_j^2 \leq N \int_0^1 [y(s)]^2  ds 
        \leq \sum_{j=1}^{N} x_j^2.
    \end{align*}
    
    We define the continuous functional $I(y)$ as
    \begin{equation*}
        I(y) = \frac{1}{t} \int_0^1 [y'(s)]^2  ds + 4t \int_0^1 [y(s)]^2  ds.
    \end{equation*}
    Thus, we obtain 
    \begin{equation}\label{perturbation}
        \av{F_t-I(y)+\coth(t)}\leq \frac{4t}{N}.
    \end{equation}
    
    By the Euler--Lagrange equation, the minimizer $\bar{y}$ of $I[y]$ subject to $y(0) = 0$ and $y(1) = 1$ satisfies:
    \begin{equation*}
        \bar{y}^{\prime\prime} - 4t^2 \bar{y} = 0.
    \end{equation*}
    The general solution is $\bar{y}(s) = A e^{2ts} + B e^{-2ts}$. The initial conditions $y(0) = 0, y(1)=1$ yield $A=-B=1/(2\sinh (2t))$. Therefore, the extremal function is
    \begin{equation*}
        \bar{y}(s) = \frac{\sinh(2ts)}{\sinh(2t)}.
    \end{equation*}
    
    Direct calculation gives
    \begin{align*}
        I[\bar{y}] &= \frac{1}{t} \int_0^1 \frac{4t^2 \cosh^2(2ts)}{\sinh^2(2t)}  ds + 4t \int_0^1 \frac{\sinh^2(2ts)}{\sinh^2(2t)}  ds \\
        &= \frac{4t}{\sinh^2(2t)} \int_0^1 \cosh(4ts)  ds\\
        &= \coth(t) + \tanh(t).
    \end{align*}
    Here we have used the identities:
    \begin{align*}
        \sinh(4t) = 2 \sinh(2t) \cosh(2t), \quad \text{and}\quad 2 \coth(2t) = \coth(t) + \tanh(t).
    \end{align*}
    
    Therefore, by inequalities \eqref{perturbation} and $I(\bar{y})\leq I(y)$, we obtain the desired inequality
    \begin{equation*}
        F_t\geq I(y)-\coth(t)-\frac{4t}{N}\geq I(\bar{y})-\coth(t)-\frac{4t}{N} = \tanh(t)-\frac{4t}{N},
    \end{equation*}
    which means $F_t>0$ for $N>4t/\tanh(t)$. This demonstrates that the integrand of \eqref{expansion_for_quotient} is log-concave, which completes the proof.
\end{proof}

Let $\Psi(s,t):=(\log\psi(s,t))^\prime=\bracket{\log\bracket{\overline{H}_G/\overline{M}}(s,t)}^\prime$. By direct calculations, we obtain the following equation satisfied by $\Psi$.
\begin{equation*}
    \Psi_t - \Psi'' = 2\Psi\Psi' - {\Psi}\coth{t} - s{\Psi'}\coth{t}.
\end{equation*}
For the vector field $X(x,t) = -\nabla \log \Phi(x,t)$, we have the evolution equation
\begin{align*}
    \left(\frac{\partial}{\partial t} - \Delta\right)X(x,t) = W(x) - 2\nabla_X X(x,t) - &\langle\nabla_{(\cdot)}X(x,t), x-\frac{z}{\cosh{t}}\rangle\coth{t} \\
    & - X(x,t)\coth{t}.
\end{align*}
Here $\langle\nabla_{(\cdot)}X(x,t), x-\frac{z}{\cosh{t}}\rangle$ denotes a vector whose inner product with any vector $Y$ is $\langle\nabla_{Y}X(x,t), x-\frac{z}{\cosh{t}}\rangle$. If the operator in \eqref{eq:GaussHeatKernel} is $L_\mu-V$, then $W(x)=\nabla{V(x)}$, where $V(x)$ is assumed to be convex. Under the condition of Theorem \ref{theorem-heatkernel}, we may simply assume $V=0$.

Therefore, with a similar argument of Theorem 4.2 of Ni \cite{ni2013}, we can derive the following lemma, which is the key step in proving the improved log-concavity estimate.
\begin{lemma}\label{parabolic-maximum-principle}
    With the notation above, the function
    \begin{equation*}
        \mathcal{C}(x,y,t) := (\tanh t)\left(\left\langle X(y,t) - X(x,t), \frac{y-x}{|y-x|}\right\rangle + 2\Psi\left(\frac{|y-x|}{2},t\right)\right)
    \end{equation*}
    cannot attain a negative minimum in the parabolic interior of $\Omega\times\Omega\times(0,T]$ for any fixed $T>0$.
\end{lemma}

The proof of Lemma \ref{parabolic-maximum-principle} follows from a careful application of the maximum principle to the parabolic operator $\partial_t - \Delta$ applied to $C(x,y,t)$, utilizing the log-concavity of $\Psi$ established in Lemma \ref{log-concavity-1-dim} and the convexity of the domain $\Omega$. The details are similar to those in \cite{ni2013} and are omitted here.

With Lemma \ref{log-concavity-1-dim} and Lemma \ref{parabolic-maximum-principle}, we can now prove the main result on the log-concavity of the Gaussian heat kernel.
\begin{theorem}
    With the notation above, we have
    \begin{equation}\label{log-concavity-quotient}
            -\left\langle \nabla_y \log \Phi(y,t) - \nabla_x \log \Phi(x,t), \frac{y-x}{|y-x|} \right\rangle \geq -2\left(\log\Psi\right)^\prime\left(\frac{|y-x|}{2},t\right). 
    \end{equation}
\end{theorem}
\begin{proof}
The same argument as in Section 3 in \cite{ni2013} implies that near the boundary, $C(x,y,t)\geq 0$. As $t\rightarrow 0+$, using the gauge transformation, we only need to consider the case when $H_G$ is the Dirichlet heat kernel of the operator $\partial_t-\Delta+q$, where $q=V+{\av{x}^2}/{4}-n/2$ is a convex function. Therefore, the short-time behavior of the logarithmic derivatives of the heat kernel \cites{Malliavin1996,Neel2007} implies that $C(x,y,t)\geq 0$ holds as $t\rightarrow 0+$. Hence, Lemma \ref{parabolic-maximum-principle} implies that $C(x,y,t)\geq 0$, which completes the proof.
\end{proof}

\begin{remark}
    Note that
    \begin{equation*}
        \left\langle \nabla_y \log M(z,y,t) - \nabla_x \log M(z,x,t), \frac{y-x}{|y-x|} \right\rangle = 2\left(\log\overline{M}\right)^\prime\left(\frac{|y-x|}{2},t\right).
    \end{equation*}
    Therefore, the inequality \eqref{log-concavity-quotient} is exactly equivalent to the inequality \eqref{log-concavity-heatkernel}, which completes the proof of Theorem \ref{theorem-heatkernel}.
\end{remark}

\section*{Acknowledgments}
We thank Professors Julie Clutterbuck, Lei Ni, Shoo Seto, Guofang Wei for helpful discussions and insightful comments. We also thank Yifeng Meng for his sincere help. The second author is supported by NSF of Jiangsu Province No. BK20231309.

    \section*{Appendix}
    We define the normalized gap as
    \begin{equation*}
        \frac{(\overline{\lambda}_2(D) - \overline{\lambda}_1(D))D^2 }{3\pi^2}
    \end{equation*}
    
    In this section, we present numerically computed values of the normalized gap for various diameters $D$. The computations were performed using the Chebyshev spectral collocation method applied to the operator given in \eqref{scaling}, namely,
    \begin{equation*}
        -\derivative{^2}{s^2} + \frac{1}{4}D^4s^2.
    \end{equation*}
    
    As shown in the graph below, the normalized gap increases with $D$. Furthermore, in accordance with Theorem~\ref{theorem_limit_1}, the normalized gap approaches $1$ as $D$ tends to $0$.
    \begin{figure}[H]
        \centering
        \includegraphics[width=0.75\textwidth]{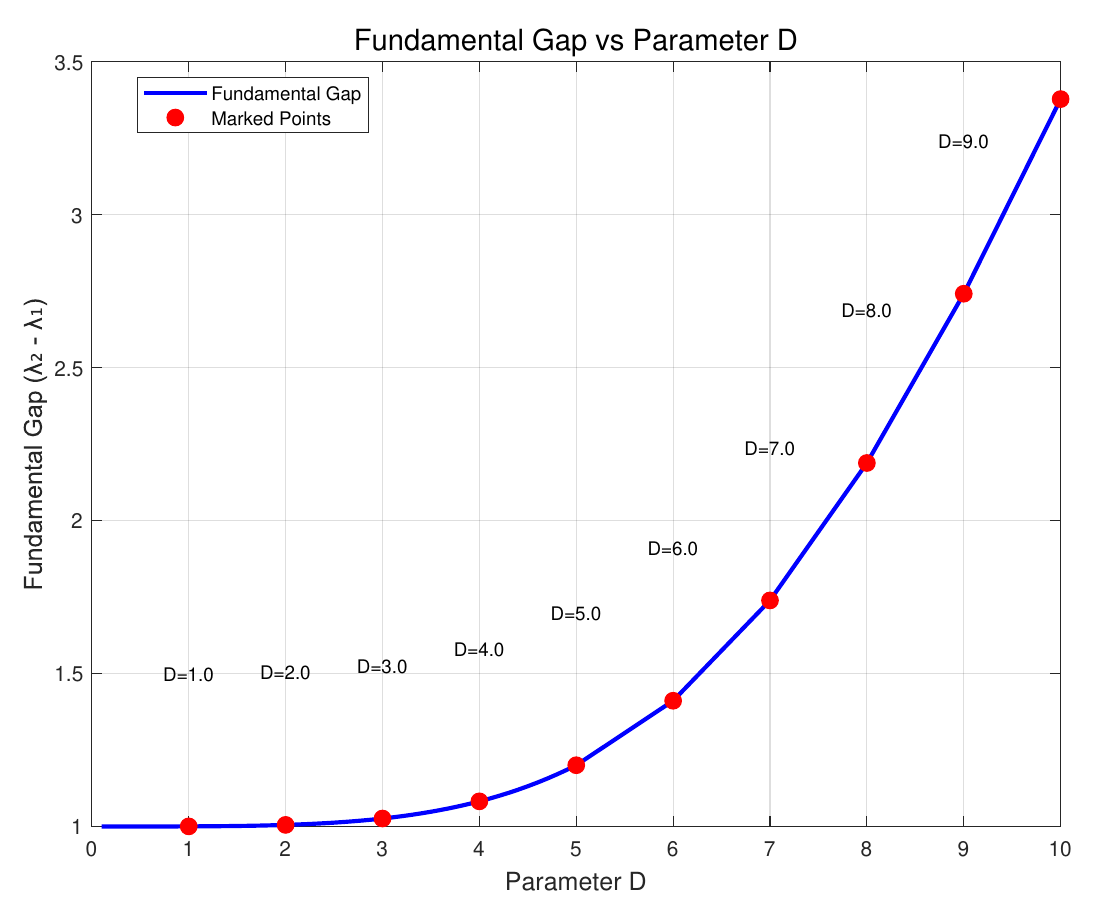}
        \caption{The fundamental gap $\frac{(\overline{\lambda}_2(D) - \overline{\lambda}_1(D))D^2 }{3\pi^2}$.}
        \label{fig:fundamental_gap}
    \end{figure}
    
    The results indicate that the gap increases with $D$, though not linearly. Detailed values are provided in the table below.
    \begin{table}[H]
        \centering
        \begin{tabular}{
          S[table-format=2.1]
          S[table-format=3.6]
          S[table-format=3.6]
          S[table-format=1.6]
        }
        \toprule
        {$D$} & {$\overline{\lambda}_1(D)$} & {$\overline{\lambda}_2(D)$} & {Gap} \\
        \midrule
        1.0 & 9.877771  & 39.496084 & 1.000321 \\
        2.0 & 10.000000 & 39.760812 & 1.005134 \\
        3.0 & 10.523736 & 40.902321 & 1.025998 \\
        4.0 & 11.887886 & 43.930465 & 1.082197 \\
        5.0 & 14.565218 & 50.105109 & 1.200315 \\
        6.0 & 18.862067 & 60.642110 & 1.411068 \\
        7.0 & 24.771932 & 76.264955 & 1.739111 \\
       8.0 & 32.063557 & 96.863410 & 2.188533 \\
       9.0 & 40.511000 & 121.695967 & 2.741919 \\
       10.0 & 50.001421 & 150.032187 & 3.378412 \\
        \bottomrule
        \end{tabular}
        \caption{Eigenvalues ($\overline{\lambda}_1(D)$, $\overline{\lambda}_2(D)$), Normalized gap (Gap)}
        \label{tab:eigenvalues}
    \end{table}

	\bibliography{Fundamental_Gap_on_Gaussian_Spaces}              %Fundamental Gap on Gaussian Spaces为.bib文件名

\end{document}